    \definecolor{myblue}{HTML}{3333CC}
    \definecolor{mylightblue}{HTML}{CCCCFF}
    \definecolor{myorange}{HTML}{FF6600}
    \definecolor{myred}{HTML}{CC3333}
    \definecolor{mygreen}{HTML}{11AA11}
    \definecolor{mylightgray}{HTML}{E0E0E0}
    \tikzstyle{vertex}=[circle, draw, fill=white, inner sep=0pt, minimum width=1ex]
    \tikzset{every picture/.append style={baseline,scale=1.1}}
    \tikzstyle{cut-edge}=[dotted]
    \tikzstyle{base}=[line width=1.2pt]
    \tikzstyle{lifted}=[blue]
\providecommand{\R}{\mathbb{R}}
\providecommand{\mc}{\mathsf{MC}}
\providecommand{\cut}{\mathsf{CUT}}
\providecommand{\1}{\mathbbmss{1}}
\newcommand{\closure}[2][2]{{}\mkern#1mu\overline{\mkern-#1mu#2\mkern-1mu}}
\providecommand{\la}{\langle}
\providecommand{\ra}{\rangle}
\providecommand{\abs}[1]{\lvert #1 \rvert}
\providecommand{\maxcut}{\textsc{max-cut}}
\providecommand{\multicut}{\textsc{multicut}}
\theoremstyle{plain}
\newtheorem{thm}{Theorem}
\newtheorem{lem}{Lemma}
\newtheorem{cor}{Corollary}
\theoremstyle{definition}
\newtheorem{defn}{Definition}
\newtheorem{fact}{Fact}
\newtheorem{assump}{Assumption}
\theoremstyle{remark}
\DeclareMathOperator{\id}{id}
\DeclareMathOperator*{\argmin}{arg min}
\DeclareMathOperator{\sign}{sign}
\def\myparagraph#1{\vspace{6.8pt}\noindent{\bf #1~~}}
\begin{document}

\title{Combinatorial persistency criteria for multicut and max-cut}
\date{}

\author[1]{Jan-Hendrik Lange}
\author[1,2,3]{Bjoern Andres}
\author[1]{Paul Swoboda}

\affil[1]{Max Planck Institute for Informatics, Saarbr\"ucken}
\affil[2]{Bosch Center for Artificial Intelligence, Renningen}
\affil[3]{University of T\"ubingen}

\maketitle

\begin{abstract}
In combinatorial optimization, partial variable assignments are called persistent if they agree with some optimal solution.
We propose persistency criteria for the multicut and max-cut problem as well as fast combinatorial routines to verify them.
The criteria that we derive are based on mappings that improve feasible multicuts, respectively cuts.
Our elementary criteria can be checked enumeratively.
The more advanced ones rely on fast algorithms for upper and lower bounds for the respective cut problems and max-flow techniques for auxiliary min-cut problems.
Our methods can be used as a preprocessing technique for reducing problem sizes or for computing partial optimality guarantees for solutions output by heuristic solvers.
We show the efficacy of our methods on instances of both problems from computer vision, biomedical image analysis and statistical physics.  
\end{abstract}

% !TeX root = 0000.tex

\section{Introduction}
Partitioning graphs into meaningful clusters is a fundamental problem in combinatorial optimization with numerous applications in computer vision, biomedical image analysis, machine learning, data mining and beyond.
The \multicut{} problem (a.k.a.\ correlation clustering) and \maxcut{} problem are arguably among the most well-known combinatorial optimization problems for partitioning graphs.
They enable graph clustering purely based on costs between pairs of nodes and are thus commonly employed to model image processing and segmentation tasks occurring in computer vision \cite{Rother2007, Andres2011, Keuper2015a, Insafutdinov2016, Beier2017}.
The following factors contribute to the significance of the \multicut{} and \maxcut{} problem: The former allows for a graph clustering formulation that determines the number of clusters as part of the optimization process. The latter is essentially equivalent to binary quadratic programming, which has a variety of applications in image processing.
However, as computer vision models are typically large-scale, standard solution techniques based on solving LP-relaxations do not scale well enough and are thus inapplicable.
Even more so, finding globally optimal solutions with branch-and-cut is infeasible with off-the-shelf commercial solvers.
Hence, the need arises for developing specialized heuristic solvers that output high-quality solutions for real-world problems, despite the worst-case NP-hardness of the \multicut{} and \maxcut{} problem.
Unfortunately, although heuristic solvers often achieve a good empirical performance, they usually come without any optimality guarantees.
Specifically, even if large parts of the variable assignments computed by a heuristic agree with globally optimal solutions, such optimality is not recognized.

In this work we consider combinatorial techniques for the \multicut{} and \maxcut{} problem by which we can efficiently find \emph{persistency} (a.k.a.\ partial optimality). 
Persistent variable assignments come with a certificate that proves their agreement with a globally optimal solution.
The potential benefits are twofold:
(i) After running a primal heuristic, we can compute certificates which show that some variables are persistent.
(ii) Even before running a heuristic, we may determine in a preprocessing step persistent variable assignments.
In either case, the problem size can be reduced.
In the first case, a subsequent optimization with exact solvers is accelerated.
In the second case, possibly also the runtime of a heuristic algorithm is reduced and the solution quality improved.

A joint treatment of the \multicut{} and \maxcut{} problem seems instructive, since many criteria have a similar formulation and are based on analogous arguments.
For the \maxcut{} problem we offer, to our knowledge, a novel approach for computing persistent variable assignments.
For the \multicut{} problem our empirical evidence suggests that our method offers substantial improvement over prior work on persistency.
%Therefore, our results are relevant for large scale problems that current heuristics can barely handle.
Our empirical results are most significant for very large scale problems which current heuristics can barely handle, e.g.\ in biomedical image segmentation~\cite{Beier2017}.
By reducing problem size via persistency, our method enables high quality solutions in such cases.

The paper is organized as follows.
In Section \ref{sec:related-work} we review the related work.
In Section \ref{sec:problem} we introduce the \multicut{} and \maxcut{} problem mathematically in a shared compact formulation.
In Section \ref{sec:improving-mappings} we recap the concept of improving mappings in the context of persistency. Further, we introduce fundamental building blocks for the construction of improving mappings for the \multicut{} and \maxcut{} problem.
In Section~\ref{sec:persistency-criteria} and~\ref{sec:algorithms} we present our combinatorial persistency criteria and devise algorithms to check them.
Finally, in Section~\ref{sec:experiments} we evaluate our methods in numerical experiments on instances from the literature and compare to related work.
The more technical proofs for our results are provided in the appendix. In the appendix we also present technical improvements of our persistency criteria that were omitted from the main paper for the sake of clarity.

% !TeX root = 0000.tex

\section{Related work} \label{sec:related-work}

Persistency for Markov Random Fields (MRF) and, as a special case, for the binary quadratic optimization problem (a.k.a.\ Quadratic Pseudo-Boolean Optimization (QPBO)), has been well studied.
It was observed in~\cite{Nemhauser1975} that a natural LP-relaxation of the stable set problem has the \emph{persistency} property: All integral variables of LP-solutions coincide with a globally optimal one.
This result has been transferred to QPBO~\cite{Hammer1984,Boros2002,Boros2008} and extended in~\cite{Wang2009} to find relational persistency, i.e. showing that some pairs of variables must have the same/different values.
For higher order binary unrestricted optimization problems, the concept of roof duality can be extended to obtain further persistency results~\cite{Rother2007,Kahl2012,Kolmogorov2012}.
Going beyond the basic LP-relaxation for QPBO, persistency certificates involving tighter LP-relaxations for higher order polynomial 0/1-programs that do not possess the persistency property (i.e.\ integral variables need not be persistent) have been studied in~\cite{Adams1998}.

For general MRFs, criteria that can be elementarily checked include Dead End Elimination (DEE) \cite{Desmet1992}.
More powerful techniques generalizing DEE that still can be used for fast preprocessing can be found in~\cite{Wang2016}.
The MQPBO method~\cite{Kohli2008} consists of transforming multilabel MRFs to the QPBO problem and persistency results from QPBO can subsequently be used to obtain persistency for the original multilabel MRF.
Persistency criteria for the multilabel Potts problem that can be efficiently checked with max-flow computations have been developed in~\cite{Kovtun2003,Kovtun2011} and refined in~\cite{Gridchyn2013}.
More powerful criteria based on LP-relaxations have been proposed for the multilabel Potts problem in~\cite{Swoboda2013} and in~\cite{Shekhovtsov2014,Swoboda2016,Shekhovtsov2017} for general discrete MRFs.
An in-depth exposition of the concept of improving mappings that is used implicitly or explicitly for all of the above MRF criteria can be found in~\cite{Shekhovtsov2013}. 
A comprehensive theoretical discussion and comparison of the above persistency techniques can be found in~\cite{Shekhovtsov2016}.

There has been, to our knowledge, less work on persistency for the \multicut{} and \maxcut{} problem.
For \multicut, the works~\cite{Alush2012,Lange2018} proposed simple persistency criteria that allow to fix some edge assignments.
We are not aware of any persistency results for \maxcut.
Also it is not easily possible to transfer persistency results from QPBO to \maxcut, even though there exist straightforward transformations between these two problems.
The underlying reason is that the transformation from \maxcut{} to QPBO introduces symmetries which current persistency criteria cannot handle.
More specifically, known persistency criteria rely on an improving mapping, but in symmetric instances it is always possible to map a labeling to an equivalent one with the same cost by exploiting symmetries. 
Consequently, fixed-points of improving mappings, which amount to persistent variables, cannot be found.
For the closely related (yet polynomial-time solvable) \textsc{min-cut} problem, a family of persistency criteria were proposed in~\cite{Padberg1990,Henzinger2018}.
They directly translate to the \maxcut{} problem and we derive them as special cases in our study below.

The more involved constraints describing the \multicut{} and \maxcut{} problem make it difficult to directly transfer some of the powerful persistency techniques that are available for MRFs.
In our work we show how the framework of improving mappings developed in~\cite{Shekhovtsov2013} can be used to derive persistency criteria for combinatorial problems with more complicated constraint structures, such as the \multicut{} and \maxcut{} problem, once a class of mappings that act on feasible solutions is identified.
Specifically, we show that the known \multicut{} persistency criteria from~\cite{Lange2018} and the persistency criteria from~\cite{Henzinger2018} (transferred to the \maxcut{} problem) can be derived in our theoretical framework.
Moreover, we define more powerful criteria that can find significantly more persistent variables, as shown in the experimental Section~\ref{sec:experiments}, yet can be evaluated efficiently.
We believe that our approach of composing improving mappings from elementary mappings is instructive in the search for more persistency criteria.

% !TeX root = 0000.tex

\section{Multicut and max-cut} \label{sec:problem}

Let
\begin{align}
\label{eq:problem}
\min \; & \la \theta, x\ra \quad \text{ s.t. } \quad x \in X \tag{P}
\end{align}
with $X \subseteq \{0,1\}^m$ be a linear combinatorial optimization problem.  
In this paper, we study specific instances of \eqref{eq:problem} known as the \multicut{} and the \maxcut{} problem, which are introduced mathematically in this section.
To this end, let $G = (V,E,\theta)$ be a weighted graph, where $\theta \in \R^E$.
We distinguish non-negative and negative edges via $E = E^+ \cup E^-$ with $E^+ = \{e \in E \mid \theta_e \geq 0\}$ and $E^- = \{e \in E \mid \theta_e < 0\}$.
For any two disjoint subsets of vertices $U, W \subseteq V$ let $\delta(U,W) = \{ uw \in E \mid u \in U, w \in W\}$ denote the set of edges between $U$ and $W$. Further, we write $\delta(U) := \delta(U, V \setminus U)$.
For any subgraph $H = (V_H, E_H)$ of $G$ we may identify $H$ with $E_H$ and write $e \in H$ instead of $e \in E_H$.

\begin{defn}[Multicuts and Cuts]
Let $(U_1,\ldots,U_k)$ be a partition of $V$, i.e. $U_1 \cup \ldots \cup U_k = V$ and $U_i \cap U_j = \varnothing$ for $i\neq j$. The set of edges $M$ between any pair of components of the partition, defined by
\begin{align*}
M = \bigcup_{1 \leq i < j \leq k} \delta(U_i, U_j),
\end{align*}
is called a \emph{multicut} of $G$.
If $k = 2$, then $M = \delta(U_1) = \delta(U_2)$ is called a \emph{cut} of $G$.
For any set of edges $F \subseteq E$ define the incidence vector $\1_F \in \{0,1\}^E$ of $F$ via
\begin{align*}
(\1_F)_e = \begin{cases} 1 & \text{if } e \in F \\ 0 & \text{else.} \end{cases}
\end{align*}
We write
\begin{align*}
\mc := & \Big \{ \1_M \mid M \text{ multicut of } G \Big \}, \\
\cut := & \left \{ \1_{\delta(U)} \mid U \subseteq V \right \} \subseteq \mc
\end{align*}
for the set of incidence vectors of multicuts, respectively cuts of $G$.
\end{defn}

\myparagraph{Multicut.}
The \multicut{} problem is to find a multicut of minimum weight w.r.t.\ $\theta$ and can be written as an instance of~\eqref{eq:problem} as follows:
\begin{align}
\label{eq:multicut}
\min \; & \la \theta, x\ra \quad\text{s.t.}\quad x \in \mc. \tag{P$_{\mc}$}
%\mc := &
%\left\{x \in \{0,1\}^E \mid 
%\begin{array}{c}
%\displaystyle x_e - \sum_{e' \in C \backslash \{e\}} x_{e'} \leq 0 \\
%\forall \text{ cycles } C, e\in C \\
%\end{array}
%\right\}
\end{align}
%The inequalities in~\eqref{eq:multicut} require every cycle to have either zero or at least two cut edges. 
%They are a subset of the inequalities defining~\eqref{eq:max-cut} (since every cut is also a multicut).

\myparagraph{Max-Cut.}
The \maxcut{} problem is to find a cut $\delta(U)$, $U \subseteq V$, of maximum weight (or equivalently of minimum weight for $-\theta$).
After setting $\theta \leftarrow -\theta$ it can be written as an instance of~\eqref{eq:problem} as follows:
\begin{align}
\label{eq:max-cut}
\min \; & \la \theta, x\ra \quad \text{ s.t. } \quad x \in \cut. \tag{P$_{\cut}$}
%\cut := & \left\{x \in \{0,1\}^E \mid
%\begin{array}{c}
% \displaystyle \sum_{e \in C\backslash F} x_e - \sum_{e \in F} x_e \leq \abs{F} - 1 \\
%\forall \text{ cycles } C, F\subset C, \abs{F} \text{ odd}\\
%\end{array}
%\hspace{-0.5em}
%\right\}
\end{align}
Note that we use $\min$ instead of $\max$ to conform to~\eqref{eq:problem}.
%The inequalities in~\eqref{eq:max-cut} require every cycle to have an even number of cut edges.

% !TeX root = 0000.tex

\section{Improving mappings} \label{sec:improving-mappings}

In this section, we introduce improving mappings as a concept to derive partial optimality results and define elementary building blocks to construct improving mappings for the \multicut{} and \maxcut{} problem.

%\begin{defn}[Persistency]
%Let $i \in [m]$ and $\beta \in \{0,1\}$. The variable $x_i$ is called (weakly) \emph{$\beta$-persistent} for problem \eqref{eq:problem} if $x^*_i = \beta$ in some optimal solution $x^*$ of \eqref{eq:problem}.
%\end{defn}

\begin{defn}[\cite{Shekhovtsov2014}]
A mapping $p \colon X \rightarrow X$ with the property
\begin{equation*}
  \la \theta, p(x) \ra \leq \la \theta, x \ra \quad \forall x \in X
\end{equation*}
is called \emph{improving mapping}.
\end{defn}
%We can require w.l.o.g.\ $p$ to be idempotent, e.g.\ $p = p \circ p$ (otherwise take the least power $p^n$ that is idempotent).

An improving mapping $p$ that maps some variable $i \in [m]$ to a fixed value $\beta$ provides \emph{persistency} (a.k.a.\ partial optimality):
For each feasible element $x \in X$, a better one is obtained by applying $p$ to $x$ and thus fixing $x_i = \beta$.

\begin{lem}[Persistency]
\label{lem:persistency}
Let $p \colon X \to X$ be an improving mapping and $\beta \in \{0,1\}$. If
\begin{align*}
p(x)_i = \beta \quad \forall x \in X,
\end{align*}
then $x^*_i = \beta$ in some optimal solution $x^*$ of \eqref{eq:problem}.
\end{lem}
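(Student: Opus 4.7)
The plan is to argue by applying the improving mapping to an arbitrary optimal solution and observing that the image must also be optimal, thereby yielding the desired optimal solution with the prescribed coordinate value.

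First I would invoke existence of some optimal solution $x^* \in X$ for \eqref{eq:problem}; this is legitimate since $X \subseteq \{0,1\}^m$ is finite (and nonempty in the cases of interest). Next, I would consider the image $y := p(x^*) \in X$. Because $p$ is an improving mapping, the defining inequality gives $\la \theta, y \ra \leq \la \theta, x^* \ra$. Combined with optimality of $x^*$, namely $\la \theta, x^* \ra \leq \la \theta, y \ra$, this forces equality $\la \theta, y \ra = \la \theta, x^* \ra$, so $y$ is also an optimal solution. By the hypothesis $p(x)_i = \beta$ for every $x \in X$, we have in particular $y_i = p(x^*)_i = \beta$, so $y$ is an optimal solution with $y_i = \beta$, completing the proof.

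There is no real obstacle here: the argument is a one-line consequence of the definitions. The only subtlety worth flagging is that one should ensure $X$ is nonempty so that an optimal $x^*$ exists; for the instances \eqref{eq:multicut} and \eqref{eq:max-cut} this is immediate since the empty multicut / trivial cut $\1_\varnothing$ is feasible. No additional structural assumption on $p$ (such as idempotence) is needed, since we only ever apply $p$ once.
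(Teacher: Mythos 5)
Your proof is correct and follows exactly the same argument as the paper: apply $p$ to an optimal solution, note that the image is again optimal and has $i$-th coordinate $\beta$. The extra remark on nonemptiness of $X$ is a harmless (and valid) bit of added care.
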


\begin{proof}
Let $x$ be an optimal solution of \eqref{eq:problem}. Then $x^* = p(x)$ is also optimal and $x^*_i = \beta$.
\end{proof}

There are two trivial improving mappings:
(i)
The identity mapping $\id \colon x \mapsto x$. It does not provide any persistency at all.
(ii) 
The mapping $p^* \colon x \mapsto x^*$ that maps any $x$ to a fixed optimal solution $x^* \in \argmin_{x \in X} \la \theta, x\ra$.
This mapping obviously provides the maximal persistency, but for NP-hard problems it is generally intractable to compute $x^*$.

We are hence interested in a middle ground: We want to find improving mappings that fix as many variables as possible (unlike $\id$) but that are computable in polynomial time (unlike $p^*$).
This allows us to simplify the original problem~\eqref{eq:problem} by fixing the persistent variables.
For the \multicut{} problem we can contract those edges that can be persistently set to $0$, which allows to shrink the underlying graph.
For the \maxcut{} problem, however, any value for persistent variables can be exploited for contractions, as we show below.

\subsection{Elementary mappings}

In order to construct improving mappings for the \multicut{} and \maxcut{} problem, we employ the elementary mappings defined in this section.

\begin{figure}
\center
a)
\begin{tikzpicture}[scale=1]
    \draw[fill=none] plot coordinates
        {(0,0) (0, 1.5) (2, 1.5) (2,0) (0,0)};
        
    \draw[fill=none] plot[smooth] coordinates
    		{(0.5,1.5) (0.4,0.9) (1.2,0.7) (1.5,0.4) (1.7,0)};
    		
    	\draw[fill=none] plot[smooth] coordinates
		{(1.2,0.7) (1.7,1.0) (1.8,1.5)};
		
	\draw[fill=none, myorange, densely dashed] plot[smooth cycle] coordinates
		{(1.6,0.6) (1.8,0.9) (1.5,1.2) (1.3,1.1)};   			
\end{tikzpicture}
b)
\begin{tikzpicture}[scale=1]
    \draw[fill=none] plot coordinates
        {(0,0) (0, 1.5) (2, 1.5) (2,0) (0,0)};
        
    \draw[fill=none] plot[smooth] coordinates
    		{(0.5,1.5) (0.4,0.9) (1.2,0.7) (1.5,0.4) (1.7,0)};
    		
    	\draw[fill=none] plot[smooth] coordinates
		{(1.2,0.7) (1.7,1.0) (1.8,1.5)}; 
    		
	\draw[fill=none] plot[smooth cycle] coordinates
		{(1.6,0.6) (1.8,0.9) (1.5,1.2) (1.3,1.1)};
			
\end{tikzpicture}
c)
\begin{tikzpicture}[scale=1]
    \draw[fill=none] plot coordinates
        {(0,0) (0, 1.5) (2, 1.5) (2,0) (0,0)};
        
    \draw[fill=none] plot[smooth] coordinates
    		{(0.5,1.5) (0.4,0.9) (1.2,0.7) (1.5,0.4) (1.7,0)};
			
\end{tikzpicture}
\caption{Illustration of elementary mappings. a) Original multicut $x \in \mc$ (solid lines) and connected region $U$ (dashed line). b) Result of cut mapping $p_{\delta(U)}(x)$. c) Result of join mapping $p_U(x)$.}
\label{fig:multicut-mappings}
\end{figure}
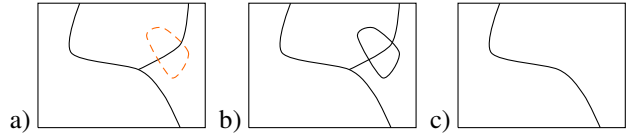

\begin{defn}[Multicut mappings]
Let $U \subseteq V$ be a set of nodes that induce a connected component of $G$.
\begin{itemize}[wide, labelwidth=!, labelindent=0pt]
\item[(i)]
The \emph{elementary cut mapping} $p_{\delta(U)}$ is defined as 
\begin{equation*}
p_{\delta(U)}(x) = x \vee \1_{\delta(U)}\,.
\end{equation*}
In other words, this means that $p_{\delta(U)}(x)_e = 1$ for all edges $e \in \delta(U)$ and $p_{\delta(U)}(x)_e = x_e$ otherwise.
\item[(ii)]
The \emph{elementary join mapping} $p_U$ is defined as
\begin{align}
\label{eq:elementary-join-mapping-definition}
p_U(x)_{uv} =
\begin{cases}
0, &  uv \in E(U) \\
0, & \exists uv\text{-path } P \text{ such that } \\ & \forall e \in E(P) : \\ &  x_e = 0 \text{ or } e \in E(U) \\
x_{uv}, & \text{otherwise}.
\end{cases}
\end{align}
\end{itemize}
\end{defn}

Intuitively, the elementary cut mapping $p_{\delta(U)}$ adds the cut $\delta(U)$ to the multicut defined by $x$. The elementary join mapping $p_U$ merges all components that intersect with $U$, cf.\ Figure \ref{fig:multicut-mappings}.
To show well-definedness of the elementary cut and join mapping rigorously, we need the following characterization of multicuts.

\begin{fact}[\cite{Chopra1993}]
A set $M \subseteq E$ is a multicut iff for every cycle $C$ of $G$ it holds that $\abs{M \cap C} \neq 1$.
\end{fact}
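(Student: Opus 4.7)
The plan is to argue both implications by relating $M$ to the connected components of the subgraph $(V, E \setminus M)$.

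For the forward direction, I would start with a multicut $M$ induced by the partition $(U_1,\ldots,U_k)$, so $M = \bigcup_{i<j} \delta(U_i,U_j)$. Suppose for contradiction that some cycle $C$ satisfies $|M \cap C| = 1$, say $M \cap C = \{e\}$ with $e = uv$. Since $e \in M$, its endpoints lie in distinct blocks $U_i$ and $U_j$. The remaining edges of $C$ form a $u$-$v$ path $P$ that avoids $M$ entirely. But every edge of $P$ must then join vertices within the same block of the partition, because any edge between two different blocks would lie in $M$. Walking along $P$ and reading off block indices therefore yields a constant sequence, forcing $u$ and $v$ into the same block — contradicting the fact that $e \in \delta(U_i,U_j)$.

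For the backward direction, assume $|M \cap C| \neq 1$ for every cycle $C$, and take the partition $(U_1,\ldots,U_k)$ consisting of the connected components of the graph $(V, E \setminus M)$. The goal is to show $M = \bigcup_{i<j} \delta(U_i,U_j)$. The inclusion $\bigcup_{i<j}\delta(U_i,U_j) \subseteq M$ is immediate: any edge whose endpoints lie in two different components of $(V,E \setminus M)$ cannot itself be in $E \setminus M$, hence must lie in $M$. For the reverse inclusion, take any $e = uv \in M$ and suppose for contradiction that $u,v$ lie in the same component $U_i$. Then there is a $u$-$v$ path $P \subseteq E \setminus M$, and $P \cup \{e\}$ is a cycle containing exactly one edge of $M$, namely $e$, which contradicts the hypothesis.

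The only mildly delicate step is the forward direction, where one must avoid conflating "the edge is between two blocks" with "the edge is in $M$"; the cleanest way to do this is to note that by the definition of $M$, an edge belongs to $M$ if and only if its endpoints are in different blocks of the partition, and then use this equivalence in both directions of the cycle-tracing argument. Beyond that the proof is routine graph theory and requires no further tools than the definitions of multicut, cycle, and connected component.
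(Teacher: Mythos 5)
Your proof is correct. Note that the paper does not prove this statement at all --- it is quoted as a known fact with a citation to Chopra and Rao --- so there is no in-paper argument to compare against; your two-directional argument (tracing block indices along the path $C\setminus\{e\}$ for the forward direction, and taking the components of $(V, E\setminus M)$ as the certifying partition for the backward direction) is the standard, self-contained proof of this characterization, and both inclusions in the backward direction are handled correctly.
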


\begin{lem}[Well-definedness]
\label{lem:well-definedness}
The mappings $p_{\delta(U)}$ and $p_U$ are well-defined, i.e.
\begin{itemize}
\item[(i)] $p_{\delta(U)} \colon \mc \rightarrow \mc$ for any connected $U \subseteq V$
\item[(ii)] $p_U \colon \mc \rightarrow \mc$ for any connected $U \subseteq V$.
\end{itemize}
\end{lem}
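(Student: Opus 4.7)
The plan is to verify both parts via the cycle characterization of multicuts stated in Fact~1: a set $M \subseteq E$ is a multicut iff no cycle of $G$ meets $M$ in exactly one edge. In each part I will assume toward contradiction that some cycle $C$ intersects the candidate edge set in a single edge, and derive a contradiction.

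For part (i), I would use the elementary fact that every cycle crosses every cut an even number of times. Identifying $p_{\delta(U)}(x)$ with the edge set $\{e \in E : x_e = 1\} \cup \delta(U)$, any cycle $C$ falls into one of two cases. If $|C \cap \delta(U)| = 0$, then $p_{\delta(U)}(x) \cap C = \{e \in C : x_e = 1\}$, which has cardinality $\neq 1$ because $x \in \mc$. Otherwise $|C \cap \delta(U)| \geq 2$, so $|p_{\delta(U)}(x) \cap C| \geq 2$. Either way the count is not $1$, so $p_{\delta(U)}(x) \in \mc$. Note that the connectedness of $U$ plays no role in this argument.

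For part (ii) I would argue by contraposition using Fact~1. Suppose some cycle $C$ hits $p_U(x)$ in exactly one edge $uv$. By the third (``otherwise'') branch of the definition of $p_U$, this forces $uv \notin E(U)$, $x_{uv} = 1$, and the nonexistence of any $uv$-path in $G$ whose edges lie entirely in $H := \{e \in E : x_e = 0\} \cup E(U)$. On the other hand, every remaining edge $f = u'v'$ of $C$ satisfies $p_U(x)_f = 0$; by inspecting the two remaining branches of the definition, each such $f$ admits a $u'v'$-path in $H$ (the trivial single-edge path in the case $f \in E(U)$ or $x_f = 0$, and the path promised by the second branch otherwise). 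Concatenating these sub-paths along $C \setminus \{uv\}$ yields a $uv$-walk inside $H$, from which a $uv$-path inside $H$ can be extracted --- a direct contradiction. Hence $|p_U(x) \cap C| \neq 1$ for every cycle $C$, and $p_U(x) \in \mc$.

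The main technical obstacle I expect to encounter is in part (ii): carefully distinguishing the three branches of the definition of $p_U$ to ensure that every non-distinguished edge of $C$ really does give rise to a replacement path inside $H$, and making sure the concatenation step is legitimate (which it is, since a walk can always be shortened to a path without introducing edges outside the ambient set $H$). The connectedness hypothesis on $U$ does not appear explicitly in the cycle argument, but it is what justifies the intuitive picture of $p_U$ as merging into a single block all components of the partition induced by $x$ that meet $U$.
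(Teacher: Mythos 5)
Your proof is correct and follows essentially the same route as the paper's: both parts rest on the cycle characterization of multicuts (Fact~1), with (i) handled by the even-crossing property of cuts and (ii) by replacing each non-distinguished edge of an offending cycle with a path inside $\{e \in E : x_e = 0\} \cup E(U)$, contradicting the ``otherwise'' branch of the definition of $p_U$ at the distinguished edge. Your explicit walk-to-path extraction is a slightly more careful rendering of the paper's symmetric-difference step, but the underlying argument is the same.
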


The elementary mapping for the \maxcut{} problem exploits the well-known property of cuts that they are closed under taking symmetric differences (of edges).
\begin{fact}[\cite{Schrijver2003}] \label{fact:sym-diff-cut}
Let $x, y \in \cut$. Then $x \triangle y \in \cut$.
\end{fact}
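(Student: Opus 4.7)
The plan is to reduce the statement to a purely set-theoretic identity on vertex subsets. Since every element of $\cut$ is of the form $\1_{\delta(U)}$ for some $U \subseteq V$, I would write $x = \1_{\delta(U)}$ and $y = \1_{\delta(W)}$. By the definition of the symmetric difference on $\{0,1\}$-vectors, $x \triangle y = \1_{\delta(U) \triangle \delta(W)}$, so it suffices to establish the identity
\begin{equation*}
\delta(U) \triangle \delta(W) \;=\; \delta(U \triangle W)\,,
\end{equation*}
which immediately yields $x \triangle y = \1_{\delta(U \triangle W)} \in \cut$ from the definition of $\cut$.

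To verify the identity, I would argue edge-by-edge using the parity characterization of cut membership. For any edge $uv \in E$ and any subset $S \subseteq V$, we have $uv \in \delta(S)$ iff $\mathbbm{1}[u \in S] \oplus \mathbbm{1}[v \in S] = 1$, where $\oplus$ denotes addition modulo two. Setting $a = \mathbbm{1}[u \in U]$, $b = \mathbbm{1}[v \in U]$, $c = \mathbbm{1}[u \in W]$, $d = \mathbbm{1}[v \in W]$, the edge $uv$ lies in $\delta(U) \triangle \delta(W)$ iff $(a \oplus b) \oplus (c \oplus d) = 1$, while it lies in $\delta(U \triangle W)$ iff $(a \oplus c) \oplus (b \oplus d) = 1$, using that $u \in U \triangle W$ iff $a \oplus c = 1$, and likewise for $v$. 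By associativity and commutativity of $\oplus$ these two expressions coincide, so the two memberships agree.

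The only mild obstacle is bookkeeping: one must confirm that $x \triangle y$ interpreted as a coordinatewise operation on $\{0,1\}^E$ genuinely corresponds to the set-theoretic symmetric difference of the underlying supports $\delta(U)$ and $\delta(W)$. This is immediate from comparing components, since $(x \triangle y)_e = 1$ precisely when $x_e \neq y_e$, i.e.\ when $e$ lies in exactly one of the two supports. Apart from that, the argument is a short parity computation and I foresee no real difficulty; in particular, no appeal to the cycle characterization (Fact 1) is needed.
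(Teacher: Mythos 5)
Your proposal is correct. Note that the paper does not supply a proof of this statement at all --- it is stated as a known fact with a citation to Schrijver --- so there is nothing to compare against; your argument, reducing the claim to the identity $\delta(U) \triangle \delta(W) = \delta(U \triangle W)$ and verifying it edge-by-edge via a parity computation, is the standard and complete proof.
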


In particular, since $x \mapsto x \triangle y$ is an involution (i.e.\ its own inverse) for any cut $y \in \cut$, it holds that $\cut \triangle y := \{ x \triangle y \mid x \in \cut\} = \cut$.
Given an instance of \maxcut{} defined by $G = (V,E,\theta)$ and a cut $y \in \cut$, this transformation of the feasible set corresponds to \emph{switching} the signs of $\theta_e$ for all $e \in E$ with $y_e = 1$ and adding the constant $\sum_{e \in E} \theta_e y_e$ to the objective value.
If $y$ is optimal for the original instance, then $y \triangle y = 0$ is optimal for the transformed instance.
Hence, whenever we want to compute persistency for $x_f = 1$, we can transform the instance to an equivalent one by applying the described switching for any cut that contains $f$ and then checking whether $x_f = 0$ holds persistently.

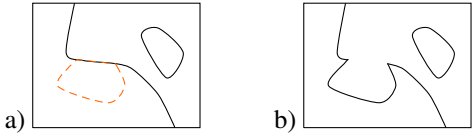
\begin{figure}
\center
a)
\begin{tikzpicture}[scale=1.0]
    \draw[fill=none] plot coordinates
        {(0,0) (0, 1.5) (2, 1.5) (2,0) (0,0)};
        
    \draw[fill=none] plot[smooth] coordinates
    	{(0.5,1.5) (0.4,0.9) (0.6,0.81) (1.0,0.77) (1.2,0.7) (1.5,0.4) (1.7,0)};
		
	\draw[fill=none] plot[smooth cycle] coordinates
		{(1.6,0.6) (1.8,0.9) (1.5,1.2) (1.3,1.1)};
		
	\draw[fill=none, myorange, densely dashed] plot[smooth cycle] coordinates
		{(0.95,0.772) (1.0,0.75) (1.1,0.5) (0.9,0.3) (0.3,0.5) (0.5, 0.795) (0.6,0.81)}; 			
\end{tikzpicture}
\hspace{2em}
b)
\begin{tikzpicture}[scale=1.0]
    \draw[fill=none] plot coordinates
        {(0,0) (0, 1.5) (2, 1.5) (2,0) (0,0)}; 
    		
	\draw[fill=none] plot[smooth] coordinates
   		{(0.5,1.5) (0.4,0.9) (0.53,0.82)};
    		
    \draw[fill=none] plot[smooth] coordinates
    	{(1.0,0.77) (1.2,0.7) (1.5,0.4) (1.7,0)};
		
	\draw[fill=none] plot[smooth cycle] coordinates
		{(1.6,0.6) (1.8,0.9) (1.5,1.2) (1.3,1.1)};
		
	\draw[fill=none] plot[smooth] coordinates
		{(1.0,0.775) (1.1,0.5) (0.9,0.3) (0.3,0.5) (0.5, 0.79) (0.53,0.815)};
\end{tikzpicture}
\caption{Illustration of symmetric difference mapping. a) Original cut $x \in \cut$ (solid lines) and cut $\delta(U)$ (dashed orange line). b) Result of symmetric difference mapping $p^\triangle_{\delta(U)}(x)$.}
\label{fig:max-cut-mapping}
\end{figure}

\begin{defn}[Symmetric Difference Mapping]
Let $U \subseteq V$. %, $f \in \delta(U)$ and $\beta \in \{0,1\}$. 
The \emph{elementary symmetric difference mapping} $p^\triangle_{\delta(U)}$ w.r.t.\ $\delta(U)$ is defined as
\begin{align*}
p^\triangle_{\delta(U)}(x) = x \triangle \1_{\delta(U)} \, .
\end{align*}
In other words, this means that $p^\triangle_{\delta(U)}(x)_e = 1 - x_e$ for all edges $e \in \delta(U)$ and $p^\triangle_{\delta(U)}(x)_e = x_e$ otherwise. The symmetric difference mapping is well-defined because of Fact \ref{fact:sym-diff-cut}. See Figure \ref{fig:max-cut-mapping} for an illustration of $p^\triangle_{\delta(U)}$.
\end{defn}

% !TeX root = 0000.tex

\section{Persistency criteria}
\label{sec:persistency-criteria}
In this section, we propose subgraph-based criteria for finding improving mappings.
We provide criteria for small connected subgraphs such as edges or triangles as well as criteria for general connected subgraphs.
In Section \ref{sec:algorithms}, we present efficient heuristic algorithms to check the subgraph criteria proposed in this section.

First consider the instructive special case of a single edge subgraph. The following criterion has been evaluated by~\cite{Lange2018} for the \multicut{} problem.

\begin{thm}[Edge Criterion]
\label{thm:single-edge-criterion}
Let $f \in E$ be an edge and $U \subseteq V$ be connected with $f \in \delta(U)$. Further, let $\beta = (1-\sign\theta_f) / 2$.
If
\begin{numcases}{}
\displaystyle \theta_f \geq \sum_{ e \in \delta(U) \setminus \{f\}} \abs{\theta_e}, & \textnormal{\ref{eq:problem}} $=$ \textnormal{\ref{eq:multicut}}, $\; \beta = 0$ \label{eq:multicut-single-edge-positive} \\[1ex]
\displaystyle \abs{\theta_f} \geq \sum_{ e \in \delta(U) \cap E^+} \theta_e, & \textnormal{\ref{eq:problem}} $=$ \textnormal{\ref{eq:multicut}}, $\; \beta = 1$ \label{eq:multicut-single-edge-negative} \\[1ex]
\displaystyle \abs{\theta_f} \geq \sum_{ e \in \delta(U) \setminus \{f\}} \abs{\theta_e}, & \textnormal{\ref{eq:problem}} $=$ \textnormal{\ref{eq:max-cut}} \label{eq:max-cut-single-edge}
\end{numcases}
then $x^*_f = \beta$ in some optimal solution $x^*$ of \eqref{eq:problem}.
\end{thm}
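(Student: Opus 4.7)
Plan. The plan is to prove each of the three criteria by exhibiting an improving mapping $p \colon X \to X$ with $p(x)_f = \beta$ for every feasible $x$, after which Lemma~\ref{lem:persistency} delivers the claim. The three cases split cleanly according to which elementary mapping is natural, and the main work is in identifying, in each case, the right candidate mapping and bounding its cost change by the quantity on the right-hand side of the inequality.

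For the \maxcut{} case~\eqref{eq:max-cut-single-edge} I would use $p(x) = p^\triangle_{\delta(U)}(x)$ when $x_f \neq \beta$ and $p(x) = x$ otherwise. A one-line calculation writes the cost change as $\sum_{e \in \delta(U)} \theta_e (1 - 2 x_e)$, in which the $f$-contribution is exactly $-\lvert \theta_f \rvert$ by the definition of $\beta$ and each remaining term is bounded by $\lvert \theta_e \rvert$, so~\eqref{eq:max-cut-single-edge} forces the sum to be non-positive. For the \multicut{} case~\eqref{eq:multicut-single-edge-negative} with $\beta = 1$ I would take $p(x) = p_{\delta(U)}(x)$ on $x_f = 0$ and identity otherwise: the cost change equals $\theta_f + \sum_{e \in \delta(U) \setminus \{f\},\, x_e = 0} \theta_e$, which is at most $-\lvert \theta_f \rvert + \sum_{e \in \delta(U) \cap E^+} \theta_e$, and the hypothesis yields $\leq 0$.

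The \multicut{} case~\eqref{eq:multicut-single-edge-positive} with $\beta=0$ is the main obstacle because neither $p_U$ nor $p_{\delta(U)}$ on its own forces $x_f$ to $0$ while perturbing only edges of $\delta(U)$. My plan is a \emph{partition swap}: writing $f = uv$ with $u \in U$, letting $C_u, C_v$ be the components of $u,v$ in $x$, and splitting
\begin{equation*}
A := C_u \cap U,\; B := C_u \setminus U,\; A' := C_v \cap U,\; B' := C_v \setminus U,
\end{equation*}
I define $p(x)$ as the multicut whose partition replaces the blocks $C_u, C_v$ by $A \cup B'$ and $A' \cup B$, leaving all other blocks untouched. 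Since $u \in A$ and $v \in B'$ lie in the same new block, $p(x)_f = 0$, and the cut-status changes are confined to the four pairwise disjoint subsets $E(A,B), E(A',B'), E(A,B'), E(A',B)$ of $\delta(U)$. Writing out the signed cost change, the $f$-edge contributes $-\theta_f$ and the remaining terms have absolute value at most $\sum_{e \in \delta(U) \setminus \{f\}} \lvert \theta_e \rvert$, so~\eqref{eq:multicut-single-edge-positive} closes the bound. The delicate step is verifying that \emph{only} edges of $\delta(U)$ change status: edges internal to $A, B, A', B'$ stay within a single block of the new partition, and edges between these sets and the other components $C_3, \ldots, C_k$ continue to straddle different blocks, so their cut status is unchanged.
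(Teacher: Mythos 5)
Your proof is correct, and for two of the three cases it coincides with the paper's: the \maxcut{} case via $p^\triangle_{\delta(U)}$ and the $\beta=1$ \multicut{} case via $p_{\delta(U)}$ are exactly the arguments in the appendix, with the same bounds. Where you genuinely diverge is the $\beta=0$ \multicut{} case \eqref{eq:multicut-single-edge-positive}: the paper applies the composition $p_f \circ p_{\delta(U)}$ (cut along $\delta(U)$, then rejoin the resulting components of $u$ and $v$ via the elementary join mapping), whereas you perform a direct partition swap, replacing $C_u, C_v$ by $A \cup B'$ and $A' \cup B$. Both mappings are supported on $\delta(U)$ and admit the identical bound $-\theta_f + \sum_{e \in \delta(U)\setminus\{f\}}\abs{\theta_e} \le 0$, but they are not the same map: after $p_{\delta(U)}$, the set $C_u \cap U$ may fall apart into several components, and the paper's join only reattaches the fragment containing $u$ to the fragment containing $v$, while your swap keeps all of $A = C_u \cap U$ together with $B' = C_v\setminus U$. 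The paper's route buys reuse of the elementary mappings and their well-definedness (Lemma \ref{lem:well-definedness}), which is the same machinery that later powers Theorems \ref{thm:multicut-subgraph-criterion} and \ref{thm:multicut-criterion-with-boundary}; your route is self-contained and makes the set of flipped edges completely explicit (the four blocks $E(A,B), E(A',B'), E(A,B'), E(A',B)$ of $\delta(U)$), at the cost of a bespoke feasibility check. That check does go through since multicuts here are defined by arbitrary (not necessarily connected) partitions, but your verification list omits one family: the edges between $A$ and $A'$ and between $B$ and $B'$, which lie outside $\delta(U)$ yet join two modified blocks; they are cut before (across $C_u, C_v$) and cut after (across $A\cup B'$ and $A'\cup B$), so nothing breaks, but you should state it.
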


The proof of Theorem~\ref{thm:single-edge-criterion} relies on Lemma \ref{lem:persistency} by applying the mapping $p^\triangle_{\delta(U)}$, respectively $p_f \circ p_{\delta(U)}$, to improve any solution $x$ with $x_f \neq \beta$.
Simple candidates for $U$ are $\{u\}$ and $\{v\}$ where $f = uv$.
Checking these for every edge $f \in E$ can be done in linear time. All $u$-$v$-cuts can be checked at once by minimizing the right-hand sides of \eqref{eq:multicut-single-edge-positive} -- \eqref{eq:max-cut-single-edge} via max-flow techniques on the weighted graph $G^{\abs{\cdot}} = (V,E,\abs{\theta})$, respectively $G^+ = (V,E^+,\theta)$ for~\eqref{eq:multicut-single-edge-negative}. Note that the condition in~\eqref{eq:multicut-single-edge-negative} is less restrictive than~\eqref{eq:multicut-single-edge-positive}. Computing a Gomory-Hu tree \cite{Gomory1961} of $G^{\abs{\cdot}}$ or $G^+$ reduces the total computational effort of checking the criterion for all edges $f \in E$ to $\abs{V} - 1$ max-flow problems.
%Then for each $f \in E$ it is possible to determine the value of the minimum cut $\delta(U)$ containing $f$ w.r.t.\ edge weights $\abs{\theta}$.

\subsection{General subgraph criteria}
We give a technical lemma that allows to generalize the persistency criterion stated in Theorem~\ref{thm:single-edge-criterion}.
\begin{lem}
\label{lemma:subgraph-criterion}
Let $f \in E$ and $\beta \in \{0,1\}$.
Further, let $H = (V_H, E_H)$ be a connected subgraph of $G$ such that $e \in E_H$.
If for every $y \in \cut(H)$ with $y_f = 1 - \beta$, there exists a mapping $p^{y} \colon X \to X$ such that for all $x \in X$ with $x_{|E_H} = y$ we have
\begin{enumerate}[wide, labelwidth=!, labelindent=0pt]
\item[(i)]
$\la \theta, p^y(x) \ra \leq \la \theta, x \ra$
\item[(ii)]
$p^y(x)_f = \beta$,
\end{enumerate}
then $x^*_f = \beta$ in some optimal solution $x^*$.
\end{lem}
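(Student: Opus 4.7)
The plan is to build a single improving mapping $p\colon X \to X$ by piecing together the family $\{p^y\}$ indexed over possible restrictions to $H$, and then to invoke Lemma~\ref{lem:persistency} to conclude persistency at the coordinate $f$. The key observation is that every feasible $x \in X$ has a well-defined restriction $x_{|E_H}$ to the subgraph $H$, and this restriction lies in the corresponding feasible set of $H$ (a cut of $H$ in the \maxcut{} case, a multicut of $H$ in the \multicut{} case, as follows from the cycle characterization of multicuts). Thus the hypothesis covers exactly those $x$ for which a modification is actually needed, namely those with $x_f = 1-\beta$.

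Concretely, I would define
\begin{equation*}
    p(x) \;=\; \begin{cases} x, & \text{if } x_f = \beta, \\ p^{x_{|E_H}}(x), & \text{if } x_f = 1-\beta. \end{cases}
\end{equation*}
In the second branch, $x_{|E_H}$ is some $y$ with $y_f = 1-\beta$, so $p^{y}$ is among the mappings supplied by the hypothesis and the definition is unambiguous.

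Next I would verify the three required properties in turn. First, $p(x) \in X$, since in the first branch $p(x)=x \in X$ and in the second branch $p^y\colon X \to X$ by assumption. Second, $\la \theta, p(x) \ra \le \la \theta, x \ra$: this is trivial in the first branch and is precisely hypothesis~(i) in the second. Third, $p(x)_f = \beta$ for every $x \in X$: in the first branch by construction, in the second branch by hypothesis~(ii). These three facts together say that $p$ is an improving mapping and that it fixes the $f$-coordinate of every feasible point to $\beta$. Applying Lemma~\ref{lem:persistency} with $i=f$ then yields the claim.

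The proof itself is essentially a case split; no serious obstacle arises. The only subtlety worth flagging is making sure that the indexing set $\{y \in \cut(H) : y_f = 1-\beta\}$ (or its \multicut{} analogue) really exhausts all restrictions $x_{|E_H}$ that can occur for $x \in X$ with $x_f = 1-\beta$, so that the second branch of the definition is always populated by an available $p^y$. Once this is observed, well-definedness and the three checks above are immediate, and the statement reduces to a direct application of Lemma~\ref{lem:persistency}.
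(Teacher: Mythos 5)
Your proposal is correct and matches the paper's own argument: both construct the piecewise mapping that applies $p^{x_{|E_H}}$ when $x_f = 1-\beta$ and the identity otherwise, verify it is improving and fixes the $f$-coordinate, and conclude via Lemma~\ref{lem:persistency}. Your extra remark that restrictions of feasible solutions to $H$ land in the feasible set of $H$ is a point the paper leaves implicit, but it does not change the route.
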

\begin{proof}
Condition (i) implies that the mapping $p \colon X \rightarrow X$ defined by
\begin{align*}
p(x) = \begin{cases} p^y(x) & \text{if } x_{|E_H} = y \\ x & \text{else} \end{cases}
\end{align*}
is improving.
Condition (ii) implies $p(x)_e = \beta$ for all $x$.
\end{proof}

Consider the special case when $H$ is a triangle subgraph.

\begin{cor}[Triangle Criterion]
\label{cor:triangle-criterion}
Let $\{uw,uv,vw\} \subset E$ be a triangle.
Let $U \subset V$ be such that $uv, uw \in \delta(U)$, and $W \subset V$ be such that $uw, vw \in \delta(W)$.
\begin{enumerate}[wide, labelwidth=!, labelindent=0pt]
\item[(i)]
If
\begin{align}
\theta_{uw} + \theta_{uv} \geq \sum_{e \in \delta(U) \backslash{\{uw,uv\}}} \abs{\theta_e} \label{eq:triangle-criterion-1} \\
\theta_{uw} + \theta_{vw} \geq \sum_{e \in \delta(W) \backslash{\{uw,vw\}}} \abs{\theta_e} \label{eq:triangle-criterion-2}
\end{align}
holds, then $x^*_{uw} = 0$ for some optimal solution of~\eqref{eq:max-cut}.
\item[(ii)]
If additionally
\begin{equation}
\theta_{uw} + \theta_{uv} + \theta_{vw} \geq \sum_{e \in \delta(\{u,v,w\}) \cap E^+} \theta_e \label{eq:triangle-criterion-3}
\end{equation}
holds, then $x^*_{uw} = 0$ for some optimal solution of~\eqref{eq:multicut}.  
\end{enumerate}
\end{cor}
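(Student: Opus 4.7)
The plan is to invoke Lemma~\ref{lemma:subgraph-criterion} with $f = uw$, $\beta = 0$, and $H$ the triangle subgraph on $\{u,v,w\}$. The feasible triangle assignments $y$ with $y_{uw} = 1$ are, for \maxcut, $y^1 = \1_{\{uv,uw\}}$ and $y^2 = \1_{\{uw,vw\}}$ (the two cuts of the triangle through $uw$); for \multicut{} we additionally have $y^3 = \1_{\{uv,vw,uw\}}$, which corresponds to splitting $\{u,v,w\}$ into three singleton components. For each such $y$ I construct a mapping $p^y$ satisfying conditions (i)--(ii) of the lemma, and the two parts of the statement then follow.

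For part (i) I would take $p^{y^1} = p^\triangle_{\delta(U)}$ and $p^{y^2} = p^\triangle_{\delta(W)}$. Whenever $x_{|E_H} = y^1$, the flip sends $x_{uw}$ from $1$ to $0$, which gives condition (ii). The cost change equals $\sum_{e \in \delta(U)} \theta_e(1 - 2 x_e)$, which splits as $-\theta_{uw} - \theta_{uv}$ plus a remainder whose absolute value is at most $\sum_{e \in \delta(U) \setminus \{uw,uv\}} \abs{\theta_e}$; inequality~\eqref{eq:triangle-criterion-1} then forces this change to be non-positive. The case $y^2$ is handled symmetrically via~\eqref{eq:triangle-criterion-2}.

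For part (ii) I would use a cut-then-join composition: set $p^{y^i} = p_{\{u,v,w\}} \circ p_{\delta(U^i)}$ for $i = 1,2,3$, where $U^1 = U$, $U^2 = W$, and $U^3 = \{u,v,w\}$. All three are well-defined because $\{u,v,w\}$ is connected in $G$ (Lemma~\ref{lem:well-definedness}). In each case, the cut step first adds the boundary of $U^i$ to the multicut, and the subsequent join merges all components intersecting $\{u,v,w\}$, forcing all three triangle edges, in particular $uw$, to $0$, which yields (ii). For $i = 1, 2$, the cost of the cut step is bounded by the sum of $\theta_e$ over the positive edges of $\delta(U^i)$ that were not yet cut, while the join can charge back at most the negatives among the freshly cut boundary edges; combined with the save of $-(\theta_{uw} + \theta_{uv})$ (respectively $-(\theta_{uw} + \theta_{vw})$) from the triangle edges, this collapses into the $\sum \abs{\theta_e}$ bound matched by~\eqref{eq:triangle-criterion-1} and~\eqref{eq:triangle-criterion-2}. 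For $i = 3$, the cut step costs at most $\sum_{e \in \delta(\{u,v,w\}) \cap E^+} \theta_e$ and the join saves $\theta_{uw} + \theta_{uv} + \theta_{vw}$, so~\eqref{eq:triangle-criterion-3} closes that case.

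The main obstacle is the bookkeeping in the multicut part: after $p_{\delta(U^i)}$ is applied, the components of $x$ get refined by the newly added cut edges, and one has to verify that the subsequent join absorbs exactly the edges claimed above, splitting $\delta(U^i) \setminus \{uw, uv\}$ correctly into its positive and negative parts so that the two steps combine into a bound in terms of $\sum \abs{\theta_e}$ rather than a mix of signed terms. Once this accounting is carried out, each subcase reduces directly to one of the three stated inequalities.
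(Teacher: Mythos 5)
Your proof is correct and follows essentially the same route as the paper: part (i) uses exactly the mappings $p^\triangle_{\delta(U)}$ and $p^\triangle_{\delta(W)}$ on the two triangle cuts through $uw$, and part (ii) uses a cut-then-join composition on the three triangle multicuts through $uw$, closed by inequalities \eqref{eq:triangle-criterion-1}--\eqref{eq:triangle-criterion-3}. The only (immaterial) deviation is that for the first two multicut cases you join over $\{u,v,w\}$ where the paper joins over $\{u,w\}$; since $v$ and $w$ (resp.\ $u$ and $v$) already lie in a common component in those cases, the two joins produce the same output and the same cost accounting.
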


A straightforward choice for the cuts in Corollary~\ref{cor:triangle-criterion} are $\delta(\{u\}), \delta(\{w\}), \delta(\{v,w\})$ and $\delta(\{u,v\})$, as depicted in Figure \ref{fig:subgraph-criteria} a).
It is possible to find better cuts w.r.t.\ costs $\abs{\theta}$, but we are not aware of any more efficient technique than to explicitly compute them via max-flow for every triangle (unlike computing a Gomory-Hu tree to evaluate the single edge criterion for all edges).

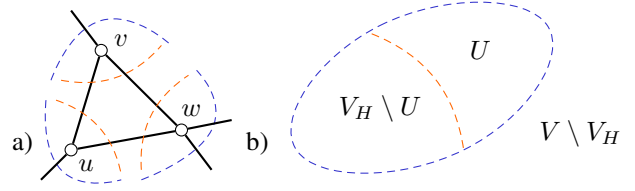
\begin{figure}
\flushright
a)
\begin{tikzpicture}[scale=1.2]
%    \draw[fill=none, densely dashed, myblue] plot[smooth cycle, tension=1] coordinates
%        {(-0.1, -0.2) (0.3, 1.3) (1.4, 0.2)};
        
    \draw[fill=none, densely dashed, myblue] plot[smooth, tension=1] coordinates
        {(-0.25,0.5) (-0.1, -0.2) (0.5,-0.3)};
        
    \draw[fill=none, densely dashed, myblue] plot[smooth, tension=1] coordinates
        {(-0.2,0.7) (0.3, 1.3) (1.0,1.05)};
        
    \draw[fill=none, densely dashed, myblue] plot[smooth, tension=1] coordinates
        {(0.7,-0.3) (1.4, 0.2) (1.25,0.8)};

	\tikzstyle{gone}=[inner sep=0pt, minimum width=0ex]      
        
	\node[vertex] (0) at (0,0) {};
	\node[vertex] (1) at (0.3,1) {};
	\node[vertex] (2) at (1.1,0.2) {};
	
	\node at (0.15,-0.15) {$u$};
	\node at (0.5,1.1) {$v$};
	\node at (1.2,0.4) {$w$};

	\draw (0) edge[thick] (1);
	\draw (1) edge[thick] (2);
	\draw (0) edge[thick] (2);
	
	\draw (0) edge[thick] (-0.3,-0.3);
	\draw (1) edge[thick] (0,1.4);
	\draw (2) edge[thick] (1.6,0.3);
	\draw (2) edge[thick] (1.4,-0.2);
	
	\draw (-0.15,0.5) edge[bend left, densely dashed, myorange] (0.5,-0.25);
	\draw (-0.1,0.7) edge[bend right, densely dashed, myorange] (0.9,1.05);
	\draw (0.7,-0.2) edge[bend left, densely dashed, myorange] (1.17,0.75);
	
\end{tikzpicture}
b)
\begin{tikzpicture}[scale=1]
    \draw[draw=black, densely dashed, myblue] plot[smooth cycle, tension=0.9] coordinates
        {(0,0) (1,1.5) (3,1.5) (2,0)};
	\draw (0.9,1.4) edge[bend left, densely dashed, myorange] (2,0);
	\node at (1,0.5) {$V_H \setminus U$};
	\node at (2.2,1.2) {$U$};
	\node at (3.4,0.2) {$V \setminus V_H$};
\end{tikzpicture}
%\vspace{1ex}
\caption{a) The conditions presented in Corollary \ref{cor:triangle-criterion} compare the weights of inner cuts (\textcolor{myorange}{- \negthinspace -}) and outer cuts (\textcolor{myblue}{- \negthinspace -}) around the triangle $\{u,v,w\}$. b) The conditions \eqref{eq:subgraph-multicut-condition} and \eqref{eq:subgraph-max-cut-condition}, presented in Theorem~\ref{thm:multicut-subgraph-criterion} and \ref{thm:max-cut-subgraph-criterion}, compare the weights of the inner cut $\delta(U, V_H \setminus U)$ and the outer cut $\delta(V_H) = \delta(U, V \setminus V_H) \cup \delta(V_H \setminus U, V \setminus V_H)$.}
\label{fig:subgraph-criteria}
\end{figure}

We further apply Lemma~\ref{lemma:subgraph-criterion} to state general subgraph criteria for the \multicut{} and \maxcut{} problem. See Figure \ref{fig:subgraph-criteria} b) for a schematic illustration.

\begin{thm}[Multicut Subgraph Criterion] \label{thm:multicut-subgraph-criterion}
Let $H = (V_H, E_H)$ be a connected subgraph of $G$ and suppose $uv \in E_H$. If
\begin{align}
\min_{y \in \mc(H)} \la \theta, y \ra = 0 \label{eq:subgraph-multicut-trivial}
\end{align}
and for all $U \subset V_H$ with $u \in U$ and $v \notin U$ it holds that
\begin{align}
\sum_{e \in \delta(U, V_H \setminus U)} \theta_e \geq \sum_{e \in \delta(V_H) \cap E^+} \theta_e, \label{eq:subgraph-multicut-condition}
\end{align}
then $x^*_{uv} = 0$ in some optimal solution $x^*$ of \eqref{eq:multicut}.
\end{thm}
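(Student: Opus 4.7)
The plan is to invoke Lemma~\ref{lemma:subgraph-criterion} with $f = uv$ and $\beta = 0$. For every $y \in \mc(H)$ with $y_{uv} = 1$, I take as $p^y$ the same composite elementary mapping
\begin{equation*}
p^y \;:=\; p_{V_H} \circ p_{\delta(V_H)},
\end{equation*}
which is well-defined by Lemma~\ref{lem:well-definedness} since $V_H$ induces a connected subgraph of $G$. Unfolding the definitions shows that $p^y(x)$ agrees with $x$ outside $E_H \cup \delta(V_H)$, vanishes on $E_H$, and equals $1$ on $\delta(V_H)$; the key point is that forcing $\delta(V_H)$ to $1$ in the intermediate multicut blocks $p_{V_H}$ from joining any components that lie strictly outside $V_H$. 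In particular $p^y(x)_{uv} = 0$, verifying condition~(ii) of Lemma~\ref{lemma:subgraph-criterion}.

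For condition~(i), I fix $x \in \mc$ with $x_{|E_H} = y$ and account for the cost change:
\begin{equation*}
\la \theta, p^y(x) \ra - \la \theta, x \ra \;=\; -\la \theta, y \ra \,+\, \sum_{\substack{e \in \delta(V_H) \\ x_e = 0}} \theta_e \;\leq\; -\la \theta, y \ra + S,
\end{equation*}
where $S := \sum_{e \in \delta(V_H) \cap E^+} \theta_e$. Hence condition~(i) reduces to the key inequality $\la \theta, y \ra \geq S$ for every $y \in \mc(H)$ with $y_{uv} = 1$.

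Proving this key inequality is the main obstacle. I would induct on the number $k$ of components $W_1, \ldots, W_k$ of the partition of $V_H$ induced by $y$, numbered so that $u \in W_1$ and $v \in W_2$. The base case $k = 2$ gives $y = \1_{\delta(W_1, V_H \setminus W_1)}$, and $\la \theta, y \ra \geq S$ is then precisely hypothesis~(\ref{eq:subgraph-multicut-condition}) with $U = W_1$. For the inductive step $k \geq 3$, abbreviate $\theta^*_{ij} := \sum_{e \in \delta(W_i, W_j)} \theta_e$. If some pair $\{i, j\} \neq \{1, 2\}$ has $\theta^*_{ij} \geq 0$, then $y' := y - \1_{\delta(W_i, W_j)}$ is a multicut of $H$ with $k - 1$ components that still separates $u$ from $v$, so the induction hypothesis gives $\la \theta, y \ra = \la \theta, y' \ra + \theta^*_{ij} \geq S$. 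If no such pair exists, then $\theta^*_{ij} < 0$ for every $\{i, j\} \neq \{1, 2\}$; picking any $m \in \{3, \ldots, k\}$ (which exists because $k \geq 3$), every $\theta^*_{mj}$ with $j \neq m$ would be strictly negative, yet $\1_{\delta(W_m, V_H \setminus W_m)}$ is a multicut of $H$ whose cost $\sum_{j \neq m} \theta^*_{mj}$ must be nonnegative by hypothesis~(\ref{eq:subgraph-multicut-trivial})—a contradiction. This closes the induction, verifies condition~(i) of Lemma~\ref{lemma:subgraph-criterion}, and yields $x^*_{uv} = 0$ in some optimal solution.
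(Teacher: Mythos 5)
Your proof is correct and takes essentially the same route as the paper's: the same composite mapping $p_{V_H} \circ p_{\delta(V_H)}$ plugged into Lemma~\ref{lemma:subgraph-criterion}, with the same cost accounting that reduces everything to the inequality $\la \theta, y \ra \geq \sum_{e \in \delta(V_H) \cap E^+} \theta_e$. Your induction on the number of parts is a welcome, more explicit justification of the step the paper dispatches with ``therefore, there exists some $U \subset V_H$ \dots''; the only (cosmetic) adjustment needed is to run the induction over partitions realizing $y$ rather than over the partition canonically induced by $y$, so that merging two parts with $\delta(W_i, W_j) = \varnothing$ still decreases $k$.
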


In the proof of Theorem~\ref{thm:multicut-subgraph-criterion} we use the mapping $p_{V_H} \circ p_{\delta(V_H)}$ to improve solutions $x \in \mc$ with $x_f \neq 0$.
Note that the \multicut{} subgraph criterion stated in Theorem~\ref{thm:multicut-subgraph-criterion} is different from the edge and triangle criteria when evaluated on these special subgraphs.
If $H = (f,f)$ for some edge $f \in E$, then condition \eqref{eq:subgraph-multicut-condition} translates to
\begin{align*}
\theta_f \geq \sum_{e \in \delta(f) \cap E^+} \theta_e.
\end{align*}
If $H$ is a triangle, i.e.\ $H = (\{u,v,w\},\{uv,uw,vw\})$ for some vertices $u,v,w \in V$, then condition \eqref{eq:subgraph-multicut-condition} translates to
\begin{align*}
\min\{\theta_{uv} + \theta_{uw}, \theta_{uv} + \theta_{vw}, \theta_{uw} + \theta_{vw}\} \nonumber \\
\geq \sum_{e \in \delta(\{u,v,w\}) \cap E^+} \theta_e.
\end{align*}

\begin{thm}[Max-Cut Subgraph Criterion] \label{thm:max-cut-subgraph-criterion}
Let $H = (V_H, E_H)$ be a connected subgraph of $G$ and suppose $uv \in E_H$. If for all $U \subset V_H$ with $u \in U$ and $v \notin U$ it holds that
%\min_{\substack{U \subset V_H : \\ u \in U, v \notin U}}
\begin{align}
& \sum_{e \in \delta(U,V_H \setminus U)} \theta_e \nonumber \\
& \geq \min \Bigg \{ \sum_{e \in \delta(U, V \setminus V_H)} \abs{\theta_e}, \sum_{e \in \delta(V_H \setminus U, V \setminus V_H)} \abs{\theta_e} \Bigg \}, \label{eq:subgraph-max-cut-condition}
\end{align}
then $x^*_{uv} = 0$ in some optimal solution $x^*$ of \eqref{eq:max-cut}.
\end{thm}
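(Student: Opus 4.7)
The plan is to invoke Lemma~\ref{lemma:subgraph-criterion} with $f = uv$ and $\beta = 0$. So for every $y \in \cut(H)$ with $y_{uv} = 1$, I need a mapping $p^y$ on $\cut$ that weakly decreases $\langle \theta, \cdot \rangle$ on the slice $S_y := \{x \in \cut : x|_{E_H} = y\}$ and forces $p^y(x)_{uv} = 0$ there. I will take $p^y$ to be a single elementary symmetric difference mapping $p^\triangle_{\delta(W)}$, which by Fact~\ref{fact:sym-diff-cut} is a well-defined self-map of $\cut$ for any $W \subseteq V$. Since $y$ separates $u$ from $v$ in $H$, write $y = \1_{\delta_H(U_y)}$ for some $U_y \subseteq V_H$ with $u \in U_y$ and $v \in V_H \setminus U_y$.

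Consider the two candidates $W = U_y$ and $W = V_H \setminus U_y$, each viewed as a subset of $V$. In both cases $\delta(W)$ decomposes disjointly as $\delta_H(U_y) \cupdot \delta(W, V \setminus V_H)$, and $uv \in \delta_H(U_y) \subseteq \delta(W)$, hence $p^\triangle_{\delta(W)}(x)_{uv} = 1 - x_{uv} = 0$. For the cost difference, expand
\begin{align*}
\langle \theta, p^\triangle_{\delta(W)}(x) \rangle - \langle \theta, x \rangle = \sum_{e \in \delta(W)} \theta_e (1 - 2 x_e).
\end{align*}
On $\delta_H(U_y)$ we have $x_e = y_e = 1$, so this contribution equals $-\sum_{e \in \delta_H(U_y)} \theta_e$; on $\delta(W, V \setminus V_H)$ the values $x_e$ are not pinned by $y$, so the contribution is bounded above by $\sum_{e \in \delta(W, V \setminus V_H)} \abs{\theta_e}$. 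Hence $p^\triangle_{\delta(W)}$ is improving on $S_y$ as soon as
\begin{align*}
\sum_{e \in \delta_H(U_y)} \theta_e \geq \sum_{e \in \delta(W, V \setminus V_H)} \abs{\theta_e},
\end{align*}
and applying the hypothesis~\eqref{eq:subgraph-max-cut-condition} with $U = U_y$ guarantees this for at least one of the two choices. Picking such a $W$ defines $p^y$ and verifies both hypotheses of Lemma~\ref{lemma:subgraph-criterion}.

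The only real subtlety is the boundary contribution: since the edges in $\delta(W, V \setminus V_H)$ are not constrained by $y$, the worst-case estimate $\theta_e(1-2x_e) \leq \abs{\theta_e}$ is unavoidable, and this could ruin the inequality if the boundary sum on the chosen side is too large. The freedom to switch between $W = U_y$ and $W = V_H \setminus U_y$ --- permitted because the symmetric difference mapping does not require $W$ to be connected --- is precisely what the $\min$ in~\eqref{eq:subgraph-max-cut-condition} affords.
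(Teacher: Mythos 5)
Your proposal is correct and follows essentially the same route as the paper's proof: both invoke Lemma~\ref{lemma:subgraph-criterion} with the single elementary mapping $p^\triangle_{\delta(U)}$, split $\delta(U)$ into the inner cut (where $x_e=1$ contributes $-\theta_e$) and the boundary edges (bounded by $\abs{\theta_e}$), and use the freedom to replace $U$ by $V_H\setminus U$ to realize the $\min$ in~\eqref{eq:subgraph-max-cut-condition}. The paper phrases your choice between $W=U_y$ and $W=V_H\setminus U_y$ as ``otherwise redefine $U:=V_H\setminus U$,'' but the argument is identical.
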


In the proof of Theorem~\ref{thm:max-cut-subgraph-criterion} we either use the mapping $p^\triangle_{\delta(U)}$ or $p^\triangle_{\delta(V_H \setminus U)}$ to improve solutions $x \in \cut$ with $x_f \neq 0$.
Note that if $H$ is a single edge or a triangle, the subgraph criterion stated in Theorem \ref{thm:max-cut-subgraph-criterion} specializes to the edge criterion, respectively triangle criterion, where only the cuts $\delta(\{u\})$, $\delta(\{v\})$, respectively $\delta(\{u\}), \delta(\{w\}), \delta(\{v,w\})$ and $\delta(\{u,v\})$ are considered.

% !TeX root = 0000.tex

\section{Algorithms} \label{sec:algorithms}

In this section we devise algorithms that verify, for a given instance $G = (V,E,\theta)$ of the \multicut{} or \maxcut{} problem, the persistency criteria presented in Section~\ref{sec:persistency-criteria}.

The edge and triangle criteria can be checked explicitly for all edges, respectively triangles of $G$. Note that listing all triangles of a graph can be done efficiently \cite{Schank2005}.

Therefore, we focus here on developing efficient algorithms that find subgraphs $H$ which qualify for the criteria from Theorem \ref{thm:multicut-subgraph-criterion} and \ref{thm:max-cut-subgraph-criterion}. 
Specifically, we propose routines that (i) check for a given connected subgraph $H$ whether some persistency criteria apply and (ii) find good candidates for $H$.

\subsection{Subgraph evaluation}

Let $H = (V_H, E_H)$ be a subgraph of $G$ that we want to check for persistency condition \eqref{eq:subgraph-multicut-condition}, respectively \eqref{eq:subgraph-max-cut-condition}.
Now, for a given edge $uv \in E_H$, we can determine if~\eqref{eq:subgraph-multicut-condition} holds true for all $U \subset V_H$ with $u \in U$ and $v \notin U$ by minimizing the left-hand side w.r.t.\ $U$.
In contrast, for \eqref{eq:subgraph-max-cut-condition}, we also need to simultaneously maximize the right-hand side, since it depends on $U$ as well.
Obviously, minimizing the left-hand side (of either \eqref{eq:subgraph-multicut-condition} or \eqref{eq:subgraph-max-cut-condition}) means finding a minimum $u$-$v$-cut w.r.t.\ $\theta$. 
Further, since the right-hand sides are non-negative, the minimum $u$-$v$-cut must have non-negative weight. However, in general the weights $\theta$ on $H$ may be negative, which renders both optimization problems hard in general.

For this reason, we simplify the problem by restriction to suitable subgraphs $H$ that satisfy Assumption \ref{assumption} below. In order to state Assumption \ref{assumption} rigorously, we need to briefly recap the following integer linear programming (ILP) formulation of the \multicut{} problem.

The \multicut{} problem can be stated equivalently to~\eqref{eq:multicut} as finding the minimum weight edge set w.r.t.\ $\abs{\theta}$ that covers every cycle with exactly one negative edge, the so-called \emph{erroneous} or \emph{conflicted} cycles \cite{Demaine2006, Lange2018}. The corresponding ILP formulation reads
\begin{align}
\min_{\hat x} \; & \la \abs{\theta}, \hat x \ra + \sum_{e \in E^-} \theta_e \label{eq:covering-primal} \\
\text{s.t.} \; & \sum_{e \in C} \hat x_e  \geq 1, \quad \forall \text{ conflicted } C \label{eq:covering-constraint} \\
& \hat x \in \{0,1\}^E. \nonumber
\end{align}
The associated packing dual is the linear program
\begin{align}
\max_\lambda \; & \la \1, \lambda \ra + \sum_{e \in E^-} \theta_e \label{eq:packing-dual} \\
\text{ s.t.} \; & \sum_{C : e \in C} \lambda_C \leq \abs{\theta_e} & \forall e \in E, \nonumber \\
& \lambda \geq 0. \nonumber
\end{align}
For any dual feasible $\lambda \geq 0$, the associated reduced costs for the primal problem \eqref{eq:covering-primal} are given by
\begin{align}
\tilde \theta_e := \bigg ( \abs{\theta_e} - \sum_{C : e \in C} \lambda_C \bigg ) \sign \theta_e \qquad \forall e \in E.\label{eq:reduced-cost}
\end{align}

\begin{assump}
\label{assumption}
Let $H = (V_H,E_H,\theta)$ be a weighted graph such that
\begin{enumerate}[i)]
\item The graph $H$ has a trivial \multicut{} solution, i.e.\ $\min_{y \in \mc(H)} \la \theta, y \ra = 0$. \label{assumption-trivial-solution}
\item An optimal packing dual solution $\lambda^*$ for the \multicut{} problem on $H$ is available. \label{assumption-optimal-dual-solution}
\end{enumerate}
\end{assump}

Note that Assumption \ref{assumption} \ref{assumption-trivial-solution} also implies a trivial \maxcut{} solution, since $\cut \subseteq \mc$.
Assumption \ref{assumption} has the following expedient consequence.

\begin{lem} \label{lem:lower-bound}
Let $H=(V_H,E_H,\theta)$ be a weighted graph that satisfies Assumption \ref{assumption}. Then $\tilde \theta_e \geq 0$ for all $e \in E_H$ and for any cut $\delta(U)$ of $H$ it holds that
\begin{align*}
0 \leq \sum_{e \in \delta(U)} \tilde \theta_e \leq \sum_{e \in \delta(U)} \theta_e.
\end{align*}
\end{lem}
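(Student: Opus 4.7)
The plan is to first establish $\tilde \theta_e \geq 0$ for every $e \in E_H$, which immediately yields the lower cut bound, and then to derive the upper cut bound via a short cycle--cut parity calculation.

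For non-negativity I would split on the sign of $\theta_e$. When $\theta_e \geq 0$, the claim $\tilde \theta_e = |\theta_e| - \sum_{C : e \in C} \lambda^*_C \geq 0$ is just dual feasibility of $\lambda^*$ in~\eqref{eq:packing-dual}. The delicate case is $\theta_e < 0$, where $\tilde \theta_e \geq 0$ amounts to the \emph{reverse} inequality $\sum_{C : e \in C} \lambda^*_C \geq |\theta_e|$. To force this, I would use both parts of Assumption~\ref{assumption} together: part~\ref{assumption-trivial-solution} gives that the primal~\eqref{eq:covering-primal} has optimum $0$, and by LP duality combined with the optimality of $\lambda^*$ in part~\ref{assumption-optimal-dual-solution}, the dual objective also equals $0$, i.e.\ $\sum_C \lambda^*_C = \sum_{e \in E^-} |\theta_e|$. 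Since every conflicted cycle contains exactly one negative edge, the left-hand side equals $\sum_{e \in E^-}\sum_{C : e \in C} \lambda^*_C$. Combined with the per-edge feasibility bound $\sum_{C : e \in C} \lambda^*_C \leq |\theta_e|$, the chain of inequalities must collapse, so equality holds edge-wise on $E^-$, and~\eqref{eq:reduced-cost} then yields $\tilde \theta_e = 0$ there.

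The lower cut bound $\sum_{e \in \delta(U)} \tilde \theta_e \geq 0$ is then immediate. For the upper bound I would rewrite $\theta_e - \tilde \theta_e = (\sign \theta_e)\sum_{C : e \in C} \lambda^*_C$ and swap the order of summation to obtain
\begin{equation*}
\sum_{e \in \delta(U)} \bigl(\theta_e - \tilde \theta_e\bigr) \;=\; \sum_C \lambda^*_C \sum_{e \in \delta(U) \cap C} \sign \theta_e.
\end{equation*}
For any conflicted cycle $C$ with $\lambda^*_C > 0$ the intersection $|C \cap \delta(U)|$ is even by cycle--cut parity and $C$ has exactly one negative edge, so the inner sum equals $|C \cap \delta(U)|$ (negative edge outside $\delta(U)$) or $|C \cap \delta(U)| - 2$ (negative edge inside), both non-negative. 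Hence the whole sum is non-negative, giving the upper bound.

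The hardest step is the negative-edge case of non-negativity: dual feasibility alone forces $\tilde \theta_e \leq 0$ there, and to flip this to equality one must invoke the trivial-multicut hypothesis and dual optimality simultaneously, tightening the feasibility constraint at every negative edge.
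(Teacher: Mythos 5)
Your proposal is correct and follows essentially the same route as the paper: non-negativity of $\tilde\theta$ via dual feasibility on $E^+$ and the collapse of the tight dual objective forcing $\tilde\theta_e=0$ on $E^-$ (which you spell out in more detail than the paper's terse assertion), and the upper cut bound via cycle--cut parity, where your signed per-cycle count $\sum_{e\in\delta(U)\cap C}\sign\theta_e\geq 0$ is the same observation as the paper's remark that any conflicted cycle whose negative edge lies in $\delta(U)$ must also cross $\delta(U)$ on a positive edge. No gaps.
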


Our method exploits Assumption \ref{assumption} and Lemma \ref{lem:lower-bound} as follows. First, we compute a solution to the packing dual~\eqref{eq:packing-dual} by the fast heuristic \emph{Iterative Cycle Packing} (ICP) algorithm from \cite{Lange2018}.
Then, if the computed dual solution shows that the \multicut{} solution on $H$ is trivial, we can compute lower bounds to the left-hand side of~\eqref{eq:subgraph-multicut-condition} and \eqref{eq:subgraph-max-cut-condition} by applying max-flow techniques on $H$ with capacities $\tilde \theta$.

In the case of the \multicut{} problem, the right-hand side of~\eqref{eq:subgraph-multicut-condition} is constant w.r.t.\ $U$ so it suffices to compute a Gomory-Hu tree on $H$.
In the case of the \maxcut{} problem, however, this is not sufficient, since the right-hand side of~\eqref{eq:subgraph-max-cut-condition} also depends on $U$. 
Here, after replacing $\theta_e$ by $\tilde \theta_e$ for all $e \in E_H$, we need to solve the following $\min \max$ problem
\begin{align}
& \min_{\substack{U \subset V_H : \\ u \in U, v \notin U}} \Bigg ( \sum_{e \in \delta(U, V_H \setminus U)} \tilde \theta_e \nonumber \\
& \quad - \min \Bigg \{ \sum_{e \in \delta(U, V \setminus V_H)} \abs{\theta_e}, \sum_{e \in \delta(V_H \setminus U, V \setminus V_H)} \abs{\theta_e} \Bigg \} \Bigg ) \nonumber \\
& = - \sum_{e \in \delta(V_H)} \abs{\theta_e} + \min_{\substack{U \subset V_H : \\ u \in U, v \notin U}} \Bigg ( \sum_{e \in \delta(U, V_H \setminus U)} \tilde \theta_e \nonumber \\
& \quad + \max \Bigg \{ \sum_{e \in \delta(U, V \setminus V_H)} \abs{\theta_e}, \sum_{e \in \delta(V_H \setminus U, V \setminus V_H)} \abs{\theta_e} \Bigg \} \Bigg ). \label{eq:min-max-problem}
\end{align}

As solving this problem exactly appears to be difficult, we propose to solve a relaxation that is obtained by replacing the inner $\max$ term with
\begin{align*}
\max_{\alpha \in [0,1]} \; \alpha \sum_{e \in \delta(U, V \setminus V_H)} \abs{\theta_e} + (1-\alpha) \sum_{e \in \delta(V_H \setminus U, V \setminus V_H)} \abs{\theta_e}
\end{align*}
and then swapping the order of $\min$ and $\max$. This yields
\begin{align}
& \eqref{eq:min-max-problem} \geq \nonumber \\
& - \sum_{e \in \delta(V_H)} \abs{\theta_e} + \max_{\alpha \in [0,1]} \min_{\substack{U \subset V_H : \\ u \in U, v \notin U}} \Bigg ( \sum_{e \in \delta(U, V_H \setminus U)} \tilde \theta_e + \nonumber \\
& \quad \alpha \sum_{e \in \delta(U, V \setminus V_H)} \abs{\theta_e} + (1-\alpha) \sum_{e \in \delta(V_H \setminus U, V \setminus V_H)} \abs{\theta_e} \Bigg ). \label{eq:max-min-relaxation}
\end{align}
The right-hand side is the maximization of a concave, non-smooth function on the unit interval, which can be solved efficiently with the bisection method. In every iteration, the inner minimization problem needs to be solved for a fixed $\alpha \in [0,1]$,  which can be formulated again as a max-flow problem.

For solving the max-flow problems that occur in our method, we use Boykov-Kolmogorov's algorithm with reused search trees \cite{Boykov2004,Kohli2005}. For computing Gomory-Hu trees, we use a parallelized implementation of Gusfield's algorithm~\cite{Gusfield1990,Cohen2011}.

\subsection{Finding candidate subgraphs}

To efficiently find good candidate subgraphs, we employ the following strategy. First, we compute a primal feasible solution $\bar x \in X$ by a fast heuristic method such as greedy edge contraction algorithms \cite{Kahruman2007, Keuper2015b}. If the heuristic solution $\bar x$ is reasonably good, then many components defined by $\bar x$ should be close to optimal. Thus, in the case of the \multicut{} problem, the components may already serve as candidate subgraphs. In the case of the \maxcut{} problem, we use $\bar x$ to transform the instance by the switching operation described in Section \ref{sec:improving-mappings}.

Then, we compute a heuristic packing dual solution $\bar \lambda$ by ICP for the entire graph $G = (V,E,\theta)$. The candidate subgraphs are determined as the connected components of the positive residual graph $(V, \{e \in E \mid \tilde \theta_e > 0 \})$, where $\tilde \theta$ is defined similarly as before in \eqref{eq:reduced-cost}. The intuition behind this strategy is that, by construction, the subgraphs' inner edges have relatively higher weight than their outgoing edges. This facilitates the application of the conditions \eqref{eq:subgraph-multicut-condition} and \eqref{eq:subgraph-max-cut-condition}.

\myparagraph{Reduced cost fixing.}
Further, whenever both a primal solution and dual solution are available, we use the following technique known as \emph{reduced cost fixing}~\cite{Balas1980} to determine additional persistent variables.
 Let $\gamma := \la \theta, \bar x \ra - \la \1, \bar \lambda \ra - \sum_{e \in E^-} \theta_e$ denote the duality gap of the primal-dual solution pair and suppose $\gamma < \tilde \theta_f$ for some $f \in E$. Then, it follows that $x_f = 1$ cannot be optimal and thus we can fix $x_f = 0$.

% !TeX root = 0000.tex

\begin{table}[!t]
	\caption{The table gathers for each data set the number of instances ($\#I$), the graph sizes and instance type (\ref{eq:problem}).}
	\label{tab:datasets}
	\small
	\setlength{\tabcolsep}{4pt}
	\centering
	\begin{tabular}{lrccc}
		\toprule
		Data set & $\#I$ & $\lvert V \rvert$ & $\lvert E \rvert$ & \ref{eq:problem} \\
		\midrule
		\emph{Image Seg.} & 100 & 156--3764 & 439--10970 & \ref{eq:multicut} \\
		\emph{Knott-3D-150} & 8 & 572--972 & 3381--5656 & \ref{eq:multicut} \\
		\emph{Knott-3D-300} & 8 & 3846--5896 & 23k--36k & \ref{eq:multicut} \\
		\emph{Knott-3D-450} & 8 & 15k--17k & 94k--107k & \ref{eq:multicut} \\
		\emph{Knott-3D-550} & 8 & 27k--31k & 173k--195k & \ref{eq:multicut} \\
		\emph{Mod. Clustering} & 6 & 34--115 & 561--6555 & \ref{eq:multicut} \\
		\midrule
		\emph{CREMI-small} & 3 & 20k--35k & 170k--235k & \ref{eq:multicut} \\
		\emph{CREMI-large} & 3 & 430k--620k & 3.2m--4.1m & \ref{eq:multicut} \\
		\midrule
		\emph{Fruit-Fly Level 1--4} & 4 & 5m--11m & 28m--72m & \ref{eq:multicut} \\
		\emph{Fruit-Fly Global} & 1 & 90m & 650m & \ref{eq:multicut} \\
		\midrule
		\textit{Ising Chain} & 30 & 100--300 & 4950--44850 & \ref{eq:max-cut} \\
		\textit{2D Torus} & 9 & 100--400 & 200--800 & \ref{eq:max-cut} \\
		\textit{3D Torus} & 9 & 125--343 & 375--1029 & \ref{eq:max-cut} \\
		\midrule
		\textit{Deconvolution} & 2 & 1001 & 11k--34k & \ref{eq:max-cut} \\
		\textit{Super Resolution} & 2 & 5247 & 15k--25k & \ref{eq:max-cut} \\
		\textit{Texture Restoration} & 4 & 7k--22k & 59k--195k & \ref{eq:max-cut} \\
		\bottomrule
	\end{tabular}
\end{table}

\section{Experiments} \label{sec:experiments}

In order to study the effectiveness of our methods, we evaluate them on a collection of more than 200 instances from the literature. The size of the instances ranges from a few hundred to hundreds of millions of variables (edges). As a performance measure, we use the average relative size reduction of test instances that is obtained by applying our algorithms.

\begin{table}
	\caption{For each dataset the table reports the average fraction of remaining nodes and edges after applying our method, respectively the method from \cite{Lange2018} (lower is better). $^\dagger$Results for \emph{Fruit-Fly Global} are without ICP-based candidate subgraphs.}
	\label{tab:shrinkage-multicut}
	\small
	\setlength{\tabcolsep}{7pt}
	\centering
	\begin{tabular}{lrrrr}
		\toprule
		& \multicolumn{2}{c}{Our} & \multicolumn{2}{c}{\cite{Lange2018}}  \\
		\cmidrule(lr){2-3} \cmidrule(lr){4-5}
		Data set & $\lvert V \rvert$ & $\lvert E \rvert$ & $\lvert V \rvert$ & $\lvert E \rvert$ \\
		\midrule
		%\emph{Image Seg.} & $46.0\%$ & $45.3\%$ & $63.7\%$ & $62.7\%$ \\
		\emph{Image Seg.} & $27.7\%$ & $27.4\%$ & $63.7\%$ & $62.7\%$ \\
		%\emph{Knott-3D-150} & $28.9\%$ & $29.6\%$ & $75.2\%$ & $88.3\%$ \\
		\emph{Knott-3D-150} & $9.7\%$ & $9.6\%$ & $75.2\%$ & $88.3\%$ \\
		%\emph{Knott-3D-300} & $61.8\%$ & $70.2\%$ & $76.7\%$ & $91.6\%$ \\
		\emph{Knott-3D-300} & $54.8\%$ & $61.6\%$ & $76.7\%$ & $91.6\%$ \\
		%\emph{Knott-3D-450} & $67.8\%$ & $78.7\%$ & $77.6\%$ & $92.4\%$ \\
		\emph{Knott-3D-450} & $66.9\%$ & $77.6\%$ & $77.6\%$ & $92.4\%$ \\
		%\emph{Knott-3D-550} & $69.4\%$ & $81.0\%$ & $77.8\%$ & $92.6\%$ \\
		\emph{Knott-3D-550} & $67.8\%$ & $79.0\%$ & $77.8\%$ & $92.6\%$ \\
		%\emph{Mod. Clustering} & $91.3\%$ & $84.1\%$ & $92.0\%$ & $85.1\%$ \\
		\emph{Mod. Clustering} & $88.7\%$ & $80.6\%$ & $92.0\%$ & $85.1\%$ \\
		\midrule
		%\emph{CREMI-small} & $37.8\%$ & $35.6\%$ & $76.6\%$ & $75.3\%$ \\
		\emph{CREMI-small} & $33.8\%$ & $31.9\%$ & $76.6\%$ & $75.3\%$ \\
		%\emph{CREMI-large} & $46.6\%$ & $46.9\%$ & $83.7\%$ & $86.6\%$ \\
		\emph{CREMI-large} & $44.0\%$ & $44.2\%$ & $83.7\%$ & $86.6\%$ \\
		\midrule
		%\emph{Fruit-Fly Level 1--4} & $9.4\%$ & $10.1\%$ & $24.6\%$ & $27.9\%$ \\
		\emph{Fruit-Fly Level 1--4} & $8.7\%$ & $9.6\%$ & $24.6\%$ & $27.9\%$ \\
		\emph{Fruit-Fly Global}$^\dagger$ & $56.3\%$ & $51.8\%$ & $77.9\%$ & $74.5\%$ \\
		\bottomrule
	\end{tabular}
\end{table}

\myparagraph{Instances.}
For the \multicut{} problem we use segmentation and clustering instances from the OpenGM benchmark~\cite{Kappes2015} as well as biomedical segmentation instances provided by the authors of~\cite{Beier2017} and \cite{Pape2017}.
The dataset \emph{Image Segmentation} contains planar graphs that are constructed from superpixel adjacencies of photographs.
The \emph{Knott-3D} data sets contains non-planar graph arising from volume images acquired by electron microscopy.
The set \emph{Modularity Clustering} contains complete graphs constructed from clustering problems on small social networks.
The \emph{CREMI} data sets contain supervoxel adjacency graphs obtained from volume image scans of neural tissue.
The \emph{Fruit-Fly} instances were generated from volume image scans of fruit fly brain matter. The global problem is the largest instance in this study with roughly 650 million variables. It represents the current limit of what can be tackled by state-of-the-art local search algorithms. The instances \emph{Level 1--4} are progressively simplified versions of the global problem obtained via block-wise domain decomposition \cite{Pape2017}.

For the \maxcut{} problem we use two different types of instances.
(i)~The datasets \emph{Ising Chain}, \emph{2D Torus} and \emph{3D Torus} contain instances that stem from applications in statistical physics \cite{Liers2005}.
The instances in \emph{Ising Chain} assume a linear order on the nodes.
For any pair of nodes there is an edge with an associated weight.
The absolute values of the weights decrease exponentially with the distance of the nodes in the linear order.
The instances in \emph{2D Torus} and \emph{3D Torus} are defined on toroidal grid graphs in two, resp.\ three dimensions with Gaussian distributed weights.
(ii)~The datasets \emph{Deconvolution}, \emph{Super Resolution} and \emph{Texture Restoration} contain QPBO instances originating from image processing applications~\cite{Rother2007, Verma2012} that are converted to our formulation of the \maxcut{} problem.
The transformation introduces an additional node that is connected with all other nodes. 
A cut (uncut) edge to the additional node signifies label 0 (resp.\ 1).
The instance size statistics for all data sets are summarized in Table \ref{tab:datasets}.

\begin{figure}
\begin{tikzpicture}
%\pgfplotsset{every x tick label/.append style={font=\scriptsize}}
\small
\begin{axis}[
	width=0.85\columnwidth,
	height=0.5\columnwidth,
	ylabel style={yshift=-1.5em},
	ymin=-10,
	ymax=110,
	ytick={100,80,60,40,20,0},
	grid style=dotted,
	ymajorgrids=true,
	%xmode=log,
	%title={\multicut},
	%xlabel={Criteria added},
	xtick={0,...,6},
	xticklabel style = {font=\scriptsize},
	xticklabels={\footnotesize $\varnothing$, \cite{Alush2012}, \cite{Lange2018}, Edge, $\triangle$, Greedy, ICP},
	ylabel={$\% \;\abs{E}$},
	legend entries={\emph{Image Seg.}, \emph{CREMI-small}, \emph{CREMI-large}},
	legend style={
		fill=none,
		draw=none,
		font=\footnotesize,
		at={(0.05, 0.08)},
		anchor=south west,
	},
	legend cell align=left
]
\addplot[color=myorange, mark=triangle*, thick] table [col sep=space, x index=0, y index=1] {data/image-seg.txt};
\addplot[color=myblue, mark=square*, thick] table [col sep=space, x index=0, y index=1] {data/cremi-small.txt};
%\addplot[color=mygreen, mark=*, thick] table [col sep=space, x index=0, y index=1] {data/cremi-small.txt};
\end{axis}
\end{tikzpicture}
\vspace{1ex}
\caption{The figure shows the average fraction of remaining variables after shrinking the instance with progressively more expensive persistency criteria. The criteria added are from left to right: none [$\varnothing$], connected components of $G^+$~\cite{Alush2012}, single node cuts \cite{Lange2018}, edge subgraphs [Edge], triangle subgraphs [$\triangle$], greedy subgraphs [Greedy], ICP candidate subgraphs and reduced cost fixing [ICP].}
\label{fig:ablation-multicut}
\end{figure}
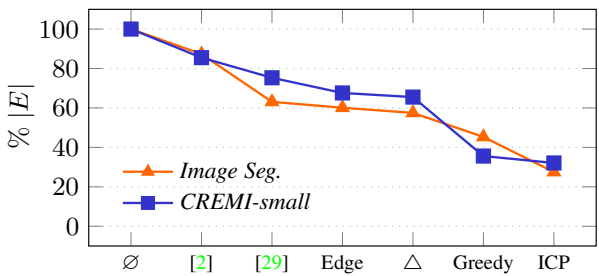

\myparagraph{Results.}
In Table \ref{tab:shrinkage-multicut}, we report for the \multicut{} instances the average graph sizes after shrinking the instances with our algorithms from Section \ref{sec:algorithms}.
In Figure~\ref{fig:ablation-multicut}, the contributions of the individual persistency criteria are separated and compared to prior work.
It can be seen from Table \ref{tab:shrinkage-multicut} and Figure~\ref{fig:ablation-multicut} that our criteria enable finding substantially more persistent variables than the prior work \cite{Alush2012, Lange2018}.
In relation to the graph sizes after shrinking with the baseline \cite{Lange2018}, our method achieves an additional size reduction of about 30--60\% for the large \emph{CREMI} and \emph{Fruit-Fly} instances.
This shows that our algorithms find persistent variable assignments that are harder to detect than with the criteria from prior work.

\begin{table}[!t]
	\caption{For each dataset, the table reports the average fraction of remaining nodes and edges after applying our method, respectively the QPBO method \cite{Rother2007} (lower is better). Note that the latter is not applicable to original \maxcut{} instances due to symmetries.}
	\label{tab:shrinkage-max-cut}
	\small
	\setlength{\tabcolsep}{7pt}
	\begin{tabular}{lrrrrr}
		\toprule
		& \multicolumn{2}{c}{Our} & \multicolumn{2}{c}{\cite{Rother2007}}  \\
		\cmidrule(lr){2-3} \cmidrule(lr){4-5}
		Data set & $\lvert V \rvert$ & $\lvert E \rvert$ & $\lvert V \rvert$ & $\lvert E \rvert$ \\
		\midrule
		\textit{Ising Chain} & $0.0\%$ & $0.0\%$ & n/a & n/a \\
		\textit{2D Torus} & $23.6\%$ & $27.9\%$ & n/a & n/a \\
		\textit{3D Torus} & $94.8\%$ & $98.1\%$ & n/a & n/a \\
		\midrule
		\textit{Deconvolution} & $61.0\%$ & $56.5\%$ & $61.0\%$ & $56.5\%$ \\
		\textit{Super Resolution} & $0.0\%$ & $0.0\%$ & $0.2\%$ & $0.1\%$ \\
		\textit{Texture Restoration} & $98.4\%$ & $98.5\%$ & $58.8\%$ & $57.3\%$ \\
		\bottomrule
	\end{tabular}
\end{table}

In Table \ref{tab:shrinkage-max-cut} we report for the \maxcut{} instances the average graph size reduction on each dataset.
For the QPBO instances we compare to the QPBO method \cite{Rother2007}.
For the original \maxcut{} instances we are unaware of any baseline method and the QPBO method is not applicable.
In Figure~\ref{fig:ablation-max-cut} we compare the contribution of the different subgraph criteria.
It can be seen that our method solves all \emph{Ising Chain} instances to optimality, which is facilitated by their particular distribution of the weights. On \emph{2D Torus} we achieve substantial size reductions and on the denser \emph{3D Torus} instances we find few persistencies.
Our results on the QPBO instances are on a par with~\cite{Rother2007} for \emph{Deconvolution} and \emph{Super Resolution} while our method is less effective for \emph{Texture Restoration}.

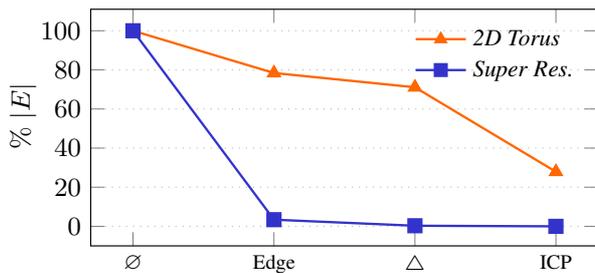
\begin{figure}[t!]
\begin{tikzpicture}
%\pgfplotsset{every x tick label/.append style={font=\scriptsize}}
\small
\begin{axis}[
	width=0.85\columnwidth,
	height=0.5\columnwidth,
	ylabel style={yshift=-1.5em},
	ymin=-10,
	ytick={100,80,60,40,20,0},
	grid style=dotted,
	ymajorgrids=true,
	%xmode=log,
	%title={\maxcut},
	%xlabel={Criteria added},
	xtick={0,...,3},
	xticklabel style = {font=\scriptsize},
	xticklabels={\footnotesize $\varnothing$, Edge, $\triangle$, ICP},
	ylabel={$\% \;\abs{E}$},
	legend entries={\emph{2D Torus}, \emph{Super Res.}},
	legend style={
		fill=none,
		draw=none,
		font=\footnotesize,
		at={(0.9, 0.88)},
		anchor=north east,
	},
	legend cell align=left
]
\addplot[color=myorange, mark=triangle*, thick] table [col sep=space, x index=0, y index=1] {data/2d-torus.txt};
\addplot[color=myblue, mark=square*, thick] table [col sep=space, x index=0, y index=1] {data/super-res.txt};
%\addplot[color=mygreen, mark=*, thick] table [col sep=space, x index=0, y index=1] {data/cremi-small.txt};
\end{axis}
\end{tikzpicture}
\vspace{1ex}
\caption{The figure shows the average fraction of remaining variables after shrinking the instance with progressively more expensive persistency criteria. The criteria added are from left to right: none [$\varnothing$], edge subgraphs [Edge], triangle subgraphs [$\triangle$], ICP candidate subgraphs and reduced cost fixing [ICP].}
\label{fig:ablation-max-cut}
\end{figure}

\section{Conclusion}

We have presented combinatorial persistency criteria for the \multicut{} and \maxcut{} problem.
Moreover, we have devised efficient algorithms to check our criteria.
For \multicut{} our method achieves a substantial improvement over prior work when evaluated on common benchmarks as well as practical instances.
For \maxcut{} we are, to the best of our knowledge, the first to propose an algorithm that computes persistent variable assignments for the general problem.
For the special case of QPBO problems, our method matches the performance of prior work on some instances.
Our results demonstrate the feasibility of computing persistent variable assignments for NP-hard graph cut problems in practice.
Besides acquiring partial optimality guarantees, our approach is a helpful tool for shrinking problem sizes and thus essential toward identifying globally optimal solutions.

{\small
\bibliographystyle{ieee}
\bibliography{0000}
}

\appendix

\section{Proofs}

\paragraph{Proof of Lemma \ref{lem:well-definedness}.}

\begin{proof}
(i)
Let $x \in \mc$ and assume that $z = p_{\delta(U)}(x) \notin \mc$. Then there exists a cycle $C$ with exactly one cut edge in $z$, i.e.\ $z_f = 1$ for some $f \in C$ and $z_e = 0$ for all $e \in C \setminus f$. It holds that $x_f = 1$ and thus $C$ crosses $\delta(U)$ exactly once, which is impossible.

(ii)
Let $x \in \mc$ and assume that $z = p_U(x) \notin \mc$.
Then there is a cycle $C$ with $z_f = 1$ for some $f \in C$ and $z_e = 0$ for all $e \in C \setminus f$.
Since $z \leq x$ there exists an edge $uv = g \in C$, $g \neq f$ with $x_{g} = 1$ and $z_{g}= 0$.
Then, according to \eqref{eq:elementary-join-mapping-definition}, there exists a $uv$-path $P$ such that $x_e = 0$ for all $e \in P \setminus E(U)$.
Replace the cycle $C$ by $C \triangle (P \cup \{g\})$.
Repeating this argument for all such edges $g \in C$ yields a path $P'$ connecting the endpoints of $f$ such that $x_{e} = 0$ for all $e \in P' \setminus E(U)$, which is a contradiction to $z_f = 1$.
\end{proof}

\paragraph{Proof of Theorem \ref{thm:single-edge-criterion}.}

\begin{proof}
First, we show that the mapping
\begin{align*}
p(x) = \begin{cases} p^\triangle_{\delta(U)}(x) & \text{if } x_f \neq \beta \\ x & \text{else} \end{cases}
\end{align*}
is improving for the \maxcut{} problem. Let $x \in \cut$, $z = p(x)$ and suppose  $x_f \neq \beta$. It holds that
\begin{align*}
\la \theta, z \ra - \la \theta, x \ra & = \theta_f (z_f-x_f) + \sum_{e \in \delta(U) \setminus \{f\}} \theta_e (z_e-x_e) \\
& = - \abs{\theta_f} + \sum_{ e \in \delta(U) \setminus \{f\}} \theta_e (z_e-x_e) \\
& \leq - \abs{\theta_f} + \sum_{ e \in \delta(U) \setminus \{f\}} \abs{\theta_e} \\
& \leq 0.
\end{align*}
Similarly, for $\beta = 0$, we show that the mapping
\begin{align*}
p(x) = \begin{cases} (p_{f} \circ p_{\delta(U)})(x) & \text{if } x_f \neq \beta \\ x & \text{else} \end{cases}
\end{align*}
is improving for the \multicut{} problem. Let $x \in \mc$, $z = p(x)$ and suppose $x_f \neq \beta$. It holds that
\begin{align*}
\la \theta, z \ra - \la \theta, x \ra & = \theta_f (0-1) + \sum_{e \in \delta(U) \setminus \{f\}} \theta_e (z_e-x_e) \\
& \leq - \abs{\theta_f} + \sum_{ e \in \delta(U) \setminus \{f\}} \abs{\theta_e} \\
& \leq 0.
\end{align*}
Finally, for $\beta = 1$, we show that the mapping
\begin{align*}
p(x) = \begin{cases} p_{\delta(U)}(x) & \text{if } x_f \neq \beta \\ x & \text{else} \end{cases}
\end{align*}
is improving for multicut. Let $x \in \mc$, $z = p(x)$ and suppose $x_f \neq \beta$. It holds that
\begin{align*}
\la \theta, z \ra - \la \theta, x \ra & = \theta_f (1-0) + \sum_{e \in \delta(U) \setminus \{f\}} \theta_e (1-x_e) \\
& \leq -\abs{\theta_f} + \sum_{ e \in \delta(U) \cap E^+} \theta_e (1-x_e) \\
& \leq - \abs{\theta_f} + \sum_{ e \in \delta(U) \cap E^+} \abs{\theta_e} \\
& \leq 0.
\end{align*}
This concludes the proof.
\end{proof}

\paragraph{Proof of Corollary \ref{cor:triangle-criterion}.}

\begin{proof}
We use Lemma~\ref{lemma:subgraph-criterion}:
\begin{enumerate}[wide, labelwidth=!, labelindent=0pt]
\item[(i)] 
In the case $x_{uw} = 1$, $x_{uv} = 1$, $x_{vw} = 0$ apply $p^\triangle_{\delta(U)}$.
In the case $x_{uw} = 1$, $x_{uv} = 0$, $x_{vw} = 1$ apply $p^\triangle_{\delta(W)}$. These mappings are improving due to \eqref{eq:triangle-criterion-1} and \eqref{eq:triangle-criterion-2}.
\item[(ii)]
In the case $x_{uw} = 1$, $x_{uv} = 1$, $x_{vw} = 0$ apply $p_{\{u,w\}} \circ p_{\delta(U)}$.
In the case $x_{uw} = 1$, $x_{uv} = 0$, $x_{vw} = 1$ apply $p_{\{u,w\}} \circ p_{\delta(W)}$. These mappings are improving analogously to (i).
In the additional case $x_{uw} = 1$, $x_{uv} = 1$, $x_{vw} = 1$ apply the mapping $p = p_{\{u,v,w\}} \circ p_{\delta(\{u,v,w\})}$. It is improving, since
\begin{align*}
& \la \theta, p(x) \ra - \la \theta, x \ra \\
& = \sum_{e \in \delta(\{u,v,w\})} \theta_e(1 - x_e) - \theta_{uv} - \theta_{uw} - \theta_{vw} \\
& \leq \sum_{e \in \delta(\{u,v,w\}) \cap E^+} \theta_e(1 - x_e) - \sum_{e \in \delta(\{u,v,w\}) \cap E^+} \theta_e \\
& \leq 0. \qedhere
\end{align*}
\end{enumerate}
\end{proof}

\paragraph{Proof of Theorem \ref{thm:multicut-subgraph-criterion}.}

\begin{proof}
We use Lemma \ref{lemma:subgraph-criterion}. Let $y \in \mc(H)$ with $y_{uv} = 1$ and suppose $x \in \mc$ with $x_{|E_H} = y$. Then there is a multicut $M$ of $H$ such that $y = \1_M$. Due to \eqref{eq:subgraph-multicut-trivial}, every (multi-)cut of $H$ has nonnegative weight. Therefore, there exists some $U \subset V_H$ with $u \in U$ and $v \notin U$ such that $\delta(U, V_H \setminus U) \subseteq M$ and
\begin{align}
\sum_{e \in E_H} \theta_e x_e = \sum_{e \in M} \theta_e \geq \sum_{e \in \delta(U, V_H \setminus U)} \theta_e. \label{eq:subgraph-multicut-partial}
\end{align}
Let $p^y(x) = (p_{V_H} \circ p_{\delta(V_H)})(x)$, then it follows from \eqref{eq:subgraph-multicut-condition} and \eqref{eq:subgraph-multicut-partial} that
\begin{align*}
& \la \theta, p^y(x) \ra - \la \theta, x \ra \\
& = \sum_{e \in \delta(V_H)} \theta_e(1 - x_e) - \sum_{e \in E_H} \theta_e x_e \\
& \leq \sum_{e \in \delta(V_H) \cap E^+} \theta_e(1-x_e) - \sum_{e \in \delta(V_H) \cap E^+} \theta_e \\
& \leq 0. \qedhere
\end{align*}
\end{proof}

\paragraph{Proof of Theorem \ref{thm:max-cut-subgraph-criterion}.}

\begin{proof}
We use Lemma \ref{lemma:subgraph-criterion}. Suppose $y \in \cut(H)$ with $y_{uv} = 1$. Let $U \subset V_H$ be such that $y$ is the incidence vector of $\delta(U, V_H \setminus U)$ in $H$ and suppose $x \in \cut$ with $x_{|E_H} = y$. We may assume that
\begin{align*}
\sum_{e \in \delta(U,V_H \setminus U)} \theta_e \geq \sum_{e \in \delta(U, V \setminus V_H)} \abs{\theta_e},
\end{align*}
otherwise redefine $U := V_H \setminus U$. Now, let $z = p^y(x) = p^\triangle_{\delta(U)}(x)$, then it follows that
\begin{align*}
& \la \theta, z \ra - \la \theta, x \ra = \sum_{e \in \delta(U)} \theta_e (z_e - x_e) \\
& = \sum_{e \in \delta(U, V_H \setminus U)} \theta_e(0 - 1) + \sum_{e \in \delta(U, V \setminus V_H)} \theta_e(z_e - x_e) \\
& \leq \sum_{e \in \delta(U, V_H \setminus U)} -\theta_e + \sum_{e \in \delta(U, V \setminus V_H)} \abs{\theta_e} \\
& \leq 0. \qedhere
\end{align*}
\end{proof}

\paragraph{Proof of Lemma \ref{lem:lower-bound}.}

\begin{proof}
As $H$ satisfies Assumption \ref{assumption}, the dual problem~\eqref{eq:packing-dual} evaluates to zero. Thus, since any conflicted cycle contains exactly one edge $e \in E_H$ with $\theta_e < 0$, we must have $\sum_{C : e \in C} \lambda^*_C = \abs{\theta_e}$, which implies $\tilde \theta_e = 0$.
Furthermore, for any cut $\delta(U)$ of $H$ it holds that
\begin{align*}
& \sum_{e \in \delta(U)} \tilde \theta_e \\
& = \sum_{e \in \delta(U) \cap E^+} \tilde \theta_e + \sum_{e \in \delta(U) \cap E^-} \tilde \theta_e \\
& = \sum_{e \in \delta(U) \cap E^+} \bigg ( \theta_e - \sum_{C : e \in C} \lambda^*_C \bigg ) + \\
& \quad \sum_{e \in \delta(U) \cap E^-} \bigg ( \theta_e + \sum_{C : e \in C} \lambda^*_C \bigg ) \\
& = \sum_{e \in \delta(U)} \theta_e + \sum_{\substack{e \in \delta(U) \cap E^- \\ C : e \in C}} \lambda^*_C - \sum_{\substack{e \in \delta(U) \cap E^+ \\ C : e \in C}} \lambda^*_C \\
& \leq \sum_{e \in \delta(U)} \theta_e.
\end{align*}
The last inequality holds true, because every cycle with precisely one negative edge $e$, where $e \in \delta(U) \cap E^-$, also contains some positive edge $f \in \delta(U) \cap E^+$, as it crosses $\delta(U)$ at least twice. This concludes the proof.
\end{proof}

\section{Persistency criteria}

In this section we describe a technical improvement of the \multicut{} subgraph criterion presented in Theorem~\ref{thm:multicut-subgraph-criterion}. Here, improvement means relaxing the inequality \eqref{eq:subgraph-multicut-condition} such that it applies more often (without compromising the persistency result).

To this end, we need to introduce some more notation. For any set of vertices $U \subseteq V$, let 
\begin{align}
\partial U := \{v \in V \mid \exists uv \in E \text{ with } u \in U \}
\end{align}
denote the \emph{boundary} of $U$ in $V$. The boundary of $U$ consists of those vertices in $V$ that have a neighbor in $U$ but are not in $U$ themselves. For any set $U \subseteq V$, its \emph{closure} $\closure U$ is defined as the union of $U$ with its boundary, i.e.\
\begin{align*}
\closure U := U \cup \partial U.
\end{align*}
Further, for any connected subgraph $H = (V_H, E_H)$, we define its \emph{positive closure} as the subgraph
\begin{align*}
\closure H := (\closure{V_H}, E_H \cup (\delta(V_H) \cap E^+) ),
\end{align*}
which additionally includes all positive edges between $V_H$ and its boundary.

Below we state a more refined version of Theorem~\ref{thm:multicut-subgraph-criterion}. The difference in Theorem~\ref{thm:multicut-criterion-with-boundary} is that the inner cut is w.r.t.\ the subgraph $\closure H$ instead of $H$.
\begin{thm}[Multicut Subgraph Criterion] \label{thm:multicut-criterion-with-boundary}
Let $H = (V_H, E_H)$ be a connected subgraph of $G$ and suppose $uv \in E_H$. If
\begin{align*}
\min_{y \in \mc(H)} \la \theta, y \ra = 0
\end{align*}
and for all $U \subset \closure{V_H}$ with $u \in U$ and $v \notin U$ it holds that
\begin{align}
\sum_{e \in \delta(U, \closure{V_H} \setminus U) \cap E_{\closure H}} \theta_e \geq \sum_{e \in \delta(V_H) \cap E^+} \theta_e, \label{eq:subgraph-multicut-condition-with-boundary}
\end{align}
then $x^*_{uv} = 0$ in some optimal solution $x^*$ of \eqref{eq:multicut}.
\end{thm}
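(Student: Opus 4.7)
The plan is to follow the template of the proof of Theorem~\ref{thm:multicut-subgraph-criterion} via Lemma~\ref{lemma:subgraph-criterion}, but with an improving mapping that enlarges $V_H$ so as to absorb those boundary vertices which the current solution $x$ already attaches to $V_H$ through an uncut positive boundary edge. Capturing these vertices inside the subgraph that is isolated and then joined is what allows the weaker right-hand side of \eqref{eq:subgraph-multicut-condition-with-boundary} to suffice.

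Concretely, for each $y \in \mc(H)$ with $y_{uv}=1$ and each $x \in \mc$ with $x_{|E_H}=y$, I would set $B(x) := \{ w \in \partial V_H \mid \exists\, v \in V_H \text{ with } vw \in \delta(V_H) \cap E^+ \text{ and } x_{vw}=0\}$ and $V^*(x) := V_H \cup B(x) \subseteq \closure{V_H}$. Since $V_H$ is connected in $G$ and every added vertex is adjacent to $V_H$, $V^*(x)$ induces a connected subgraph of $G$, so the elementary mappings $p_{V^*(x)}$ and $p_{\delta(V^*(x))}$ are well-defined. The candidate mapping is $p^y(x) := p_{V^*(x)} \circ p_{\delta(V^*(x))}(x)$; by Lemma~\ref{lem:well-definedness} it lies in $\mc$, and since $u,v \in V_H \subseteq V^*(x)$, it satisfies $p^y(x)_{uv}=0$ automatically.

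The crux is to verify $\la \theta, p^y(x)\ra \leq \la \theta, x\ra$. Because $p_{\delta(V^*(x))}$ severs the entire boundary of $V^*(x)$, the subsequent join cannot propagate any further, so the cost change reduces to $\Delta = \sum_{e \in \delta(V^*(x))} \theta_e (1-x_e) - \sum_{e \in E(V^*(x))} \theta_e x_e$. By construction of $B(x)$, every positive edge in $\delta(V_H, \partial V_H \setminus B(x))$ is $x$-cut, so the only potentially positive uncut contributions to the first sum lie in $\delta(B(x), V \setminus V^*(x))$. To absorb them, I would apply \eqref{eq:subgraph-multicut-condition-with-boundary} to $U := U_i^*(x)$, the $x$-connected component of $u$ within $V^*(x)$, which satisfies $U_i^*(x) \subset \closure{V_H}$ with $u \in U_i^*(x)$ and $v \notin U_i^*(x)$. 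Every edge contributing to the condition's left-hand side is $x$-cut, and these edges split naturally into contributions inside $E(V^*(x))$ and inside $\delta(V^*(x))$. Combined with the trivial-multicut hypothesis $\min_{y \in \mc(H)} \la \theta, y\ra = 0$, which ensures $\sum_{e \in E_H, x_e=1} \theta_e \geq 0$, this yields the desired bound $\Delta \leq 0$.

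The main obstacle is precisely this bookkeeping step: one has to carefully partition the cut edges featuring on the left-hand side of \eqref{eq:subgraph-multicut-condition-with-boundary} into those lying in $E(V^*(x))$ (which contribute to the internal sum above) and those lying in $\delta(V^*(x))$ (which cancel the positive uncut contributions), while separately handling any edges internal to $\partial V_H$, which are absent from $E_{\closure H}$ and therefore uncontrolled by the condition. Once the bookkeeping is carried out, Lemma~\ref{lem:persistency} delivers the claimed persistency via Lemma~\ref{lemma:subgraph-criterion}.
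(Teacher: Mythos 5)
Your proposal takes a genuinely different route from the paper's, and the route does not work. The paper keeps the mapping of Theorem~\ref{thm:multicut-subgraph-criterion} completely unchanged, $p^y = p_{V_H}\circ p_{\delta(V_H)}$, and only refines the accounting: since $x$ restricted to $E_{\closure H}$ is a multicut of $\closure H$ and every multicut of $\closure H$ has nonnegative weight, one extracts a $u$-$v$ cut $\delta(U,\closure{V_H}\setminus U)\cap E_{\closure H}$ contained in it, giving $\sum_{e\in\delta(V_H)\cap E^+}\theta_e x_e + \sum_{e\in E_H}\theta_e x_e \geq \sum_{e\in\delta(V_H)\cap E^+}\theta_e$ by \eqref{eq:subgraph-multicut-condition-with-boundary}; this extra credit from the \emph{cut} positive boundary edges is exactly what closes the estimate of Theorem~\ref{thm:multicut-subgraph-criterion}. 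You instead modify the mapping by absorbing the boundary vertices $B(x)$ into the region that is severed and joined.

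The gap is that $p_{\delta(V^*(x))}$ is not affordable. Cutting $\delta(V^*(x))$ pays for every edge leaving $B(x)$ toward $V\setminus\closure{V_H}$; such edges lie neither in $E_{\closure H}$ nor in $\delta(V_H)$, so neither \eqref{eq:subgraph-multicut-condition-with-boundary} nor the trivial-multicut hypothesis controls them, and they can be positive, uncut in $x$, and arbitrarily heavy. Concretely: let $H$ be the single edge $uv$ with $\theta_{uv}=10$, let $w$ be adjacent to $u$ with $\theta_{uw}=1$ (so $\delta(V_H)\cap E^+=\{uw\}$ and the hypotheses hold), and let $w$ be adjacent to a further vertex $t$ with $\theta_{wt}=1000$. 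For $x$ with $x_{uv}=1$ and $x_{uw}=x_{wt}=0$ you get $B(x)=\{w\}$, and your mapping cuts $wt$ at cost $+1000$ against a saving of $10$ from joining $uv$, so the objective strictly increases, while the paper's mapping improves $x$ by at least $9$. The cancellation you hope for in the bookkeeping step cannot occur, because the left-hand side of \eqref{eq:subgraph-multicut-condition-with-boundary} is intersected with $E_{\closure H}$ and therefore never contains an edge of $\delta(B(x), V\setminus V^*(x))$; the ``obstacle'' you flag is not bookkeeping but a genuine failure of the improving property. The repair is to revert to cutting only $\delta(V_H)$ and joining only $V_H$, and to put the boundary information where it belongs, namely into the lower bound on what $x$ already pays.
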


\begin{proof}
The proof is largely analogous to the proof of Theorem \ref{thm:multicut-subgraph-criterion}. Suppose $x \in \mc$ with $x_{|E_H} = y \in \mc(H)$ and $y_{uv} = 1$. Apparently, there exists a multicut $M$ of $\closure H$ which extends $y$ such that $x_{|E_{\closure H}} = \1_M$.
Similarly to before, there exists some $U \subset \closure{V_H}$ with $u \in U$ and $v \notin U$ such that $\delta(U, \closure{V_H} \setminus U) \cap E_{\closure H} \subseteq M$ and
\begin{align}
\sum_{e \in \delta(V_H) \cap E^+} \theta_e x_e + \sum_{e \in E_H} \theta_e x_e \geq \sum_{e \in \delta(U, \closure{V_H} \setminus U) \cap E_{\closure H}} \theta_e. \label{eq:subgraph-multicut-partial-with-boundary}
\end{align}
Eventually, using \eqref{eq:subgraph-multicut-condition-with-boundary} and \eqref{eq:subgraph-multicut-partial-with-boundary}, we show that the mapping $p^y = (p_{V_H} \circ p_{\delta(V_H)})$ still improves $x$, as follows:
\begin{align*}
& \la \theta, p^y(x) \ra - \la \theta, x \ra \\
& = \sum_{e \in \delta(V_H) \cap E^+} \theta_e(1-x_e) + \sum_{e \in \delta(V_H) \cap E^-} \theta_e(1 - x_e) \nonumber \\
& \quad - \sum_{e \in E_H} \theta_e x_e \\
& \leq \sum_{e \in \delta(V_H) \cap E^+} \theta_e - \sum_{e \in \delta(V_H) \cap E^+} \theta_e x_e - \sum_{e \in E_H} \theta_e x_e \\
& \leq \sum_{e \in \delta(V_H) \cap E^+} \theta_e - \sum_{e \in \delta(V_H) \cap E^+} \theta_e \\
& = 0. \qedhere
\end{align*}
\end{proof}

The inequality \eqref{eq:subgraph-multicut-condition-with-boundary} is less restrictive than \eqref{eq:subgraph-multicut-condition} in Theorem~\ref{thm:multicut-subgraph-criterion}, because the left-hand side is potentially larger. Indeed, if two neighboring nodes $u, v \in V_H$ are connected by positive edges to some vertex $w \in \partial V_H$ in the boundary (i.e.\ they form a positive triangle), then the extension of any cut that separates $u$ from $v$ has to cut another edge of the triangle. Thus, the weight of this edge can be subtracted from the right-hand side of the inequality \eqref{eq:subgraph-multicut-condition} or, equivalently, added to the left-hand side, which is what \eqref{eq:subgraph-multicut-condition-with-boundary} achieves.

For the special case of a single edge subgraph $H = (\{u,v\},\{uv\})$ the refined condition is explicitly stated as
\begin{align}
\theta_{uv} & \geq \sum_{e \in \delta({uv}) \cap E^+} \theta_e - \hspace{-0.5em} \sum_{w \neq u,v \mid uw, vw \in E^+} \hspace{-0.5em} \min \{ \theta_{uw}, \theta_{vw} \}.
\end{align}

\end{document}